\newcommand{\rep}{\mathbf{Rep}}
\newcommand{\norep}{\mathbf{NoRep}}
\DeclareMathOperator{\esse}{\mathcal{S}}
\newcommand{\baire}{\omega^{<\omega}}
\newcommand{\ex}{\mathbf{Ex}}
\newcommand{\Baire}{\omega^\omega}
\newcommand{\Cantor}{2^{\omega}}
\newcommand{\thpi}[2]{\mathrm{Th}_{\Pi_{#1}^{\mathrm{in}}}(#2)}
\newcommand{\str}[1]{\langle #1 \rangle}
\renewcommand{\bc}{\mathbf{Bc}}
\newcommand{\WO}{\mathbf{WO}}
\newcommand{\NWO}{\mathbf{NWO}}
\newcommand{\WF}{\mathbf{WF}}
\newcommand{\IF}{\mathbf{IF}}
\begin{document}
\title{Uniformity in learning structures}
\author[Cipriani]{Vittorio Cipriani}
\author[Rossegger]{Dino Rossegger}

\thanks{The first author was supported by the Austrian Science Fund (FWF) 10.55776/P36781.
The last author was supported by the Austrian Science Fund (FWF) 10.55776/PIN1878224.}
\keywords{Algorithmic learning theory, Descriptive set theory, Computable structure theory, Uniformity}
\subjclass[2020]{03E15, 68Q32}

\address[Cipriani,Rossegger]{Institute of Discrete Mathematics and Geometry, Technische Universit{\"a}t Wien, Wiedner Hauptstraße 8-10/104, 1040 Wien, Austria}
\email{vittorio.cipriani17@gmail.com} 
\urladdr{\url{vittoriocipriani.github.io}}
\email{dino.rossegger@tuwien.ac.at}
\urladdr{\url{drossegger.github.io}}

\begin{abstract}
The standard framework for studying learning problems on algebraic structures assumes that the structures in the target family are pairwise nonisomorphic. Under this assumption, the most widely investigated learning criterion—$\ex$-learning—becomes inherently equivalent to the well-known paradigm of $\bc$-learning. This paper explores what happens when the nonisomorphism requirement is removed and analyzes the extent to which these two learning criteria remain uniformly equivalent.
\end{abstract}
\maketitle


\section{Introduction}
Algorithmic learning theory originates in the work of Gold \cite{Gold67} and Putnam \cite{putnam1965trial} from the 1960s, and encompasses several formal frameworks for algorithmic learning theory. Broadly speaking, this research program models how a learner acquires systematic knowledge about an environment as increasingly large amounts of data become available. Classical paradigms focused mainly on inferring formal languages or computable functions (see, e.g., \cite{lange2008learning,zz-tcs-08}).

More recently, researchers have begun studying the complexity of the isomorphism relation on countable structures in this context. The standard framework draws on concepts and methods from computable structure theory, and was introduced and later refined in \cite{bazhenov2020learning,bazhenov2021learning}. In this setting, a learner $\mathbf{L}$ is given a family of structures $\vec{\A}$ in a fixed vocabulary. A structure $\mathcal{S}$, isomorphic to some $\A_i \in \vec{\A}$, is then revealed over infinitely many stages, each stage providing a finite fragment of $\mathcal{S}$. At every stage $s$, the learner outputs a hypothesis $i_s \in \omega$ indicating which $\A_i$ it believes $\mathcal{S}$ is isomorphic to. Several notions of success have been considered, with $\ex$-learning being the most prominent: a family $\vec{\A}$ is $\ex$-learnable if there exists a learner which eventually stabilizes on the correct conjecture.

Another important criterion in the learning of formal languages and computable functions is $\bc$-learnability: a family is $\bc$-learnable if there is a learner that, from some stage onward, always outputs a correct index for the target language or function. This criterion is meaningful because a given formal language or computable function has many distinct indices.

When learning isomorphism on structures one usually assumes that the structures in $\vec \A$ are pairwise nonisomorphic. This assumption is justified by the fact that $\vec{\A}$ is countable: if one formalizes $\mathbf{L}$ as a continuous map between suitable spaces, then (so the argument goes) one may harmlessly add countably much auxiliary information without affecting continuity. However, one might criticize that the process of going from arbitrary $\vec{\A}$ to $\vec{\A}'$ containing exactly one element of every isomorphism class occurring in $\vec{\A}$ must be inherently non-uniform, as the isomorphism relation on countable structures is very complicated. If a vocabulary $\tau$ contains one binary relation symbol, then the isomorphism relation on $\tau$-structures is not Borel, and thus figuring out whether two elements in a sequence are isomorphic cannot be done uniformly using any countable procedure. One might thus expect that there is no Borel function $f$ that given as input a sequence $\vec\A$ that is $\ex$-learnable, outputs a sequence $f(\vec \A)$ of pairwise non-isomorphic structures that is $\ex$-learnable.

Furthermore, assuming pairwise non-isomorphism, $\ex$- and $\bc$-learning become trivially equivalent. However, this raises the following natural question: what happens if we drop the requirement that the structures in $\vec{\A}$ are pairwise nonisomorphic?

As \Cref{thm:bfchar} shows, even without this assumption, a family $\vec{\A}$ is $\ex$-learnable if and only if it is $\bc$-learnable. On the other hand, \cref{thm:nofunction} shows that this equivalence is again inherently non-uniform, by showing that there is no function that transforms a $\bc$-learner into an $\ex$-learner. Only if we additionally give the family to be learned as input, do we obtain a Borel function that transforms $\bc$-learners for that family into $\ex$-learners. Thus, one might say that the equivalence is uniform in the family.
At last, we confirm in \cref{thm:repnorep} our suspicion that there is no Borel function reducing the set of learnable sequences to the set of learnable pairwise non-isomorphic sequences.

  \section{Background}
  \label{sec:background}
  \subsection{Descriptive set theory}
  We will use standard techniques and notions from descriptive set theory and point the reader to~\cite{kechris2012} for a detailed treatment of the subject. We consider countable structures in relational vocabulary $\tau=(R_i/a_i)_{i\in I}$ and universe $\omega$ as elements of the compact Polish space
  \[
    Mod(\tau)\subseteq \prod_{i\in I} ( 2^{\omega} )^{a_i}
  \]
  whose topology is the usual product topology.
Many familiar classes of structures such as the class of linear orders $\mathrm{LO}$ are closed subsets of $Mod(\tau)$ and thus form a Polish space via the subspace topology.

One space that is central to our analysis is the \define{space $C(Mod(\tau),\omega^\omega)$ of continuous functions} from $Mod(\tau)\to \omega^\omega$ with the topology induced by the uniform metric. Since $Mod(\tau)$ is compact Polish and $\omega^\omega$ is Polish, so is $C(Mod(\tau),\omega^\omega)$ (\cite[Theorem 4.19]{kechris2012}).
  \subsection{Algorithmic learning}
  Algorithmic learning of the isomorphism relation of countable structures was first introduced in \cite{FKS-19} and can be informally described via the following game. A learner $\mathbf{L}$ is presented with a family of structures $\vec \A=(\A_i)_{i \in \omega}\in Mod(\tau)^\omega$ and, at each stage $s$, with a finite substructure $\S\restrict s$ having domain $\{0,\dots,s\}$ of a structure $\S\in Mod(\tau)$. They are only guaranteed that $\bigcup_{s\in\omega} \S\restrict s=\S$ and have to guess at each stage $s$ to which $\A_i$ the structure $\S$ is isomorphic. Note that, in this paper, we allow the possibility that $\S$ is not isomorphic to any member of $\vec \A$, as is done, for example, in \cite{FKS-19}.

 Algorithmic learning theory considers several criteria for determining when a learner succeeds. The most widely studied in the context of structures is \define{explanatory learning} ($\ex$-learning): a learner successfully learns $\vec{\A}$ if, when presented with some $\S \cong \A_j$, the limit $i = \lim_s i_s$ of its stagewise guesses exists and satisfies $\A_i \cong \A_j$. Another well-established criterion—prominently investigated in the learning of formal languages and computable functions—is \define{behaviorally correct learning} ($\bc$-learning). Its definition readily extends to structures by dropping the requirement that the guesses converge; instead, we merely require that $\A_{i_s} \not\cong \S$ for only finitely many stages $s$. We now define these notions formally.

  \begin{definition}\label{def:learning}
    Let $K\subseteq Mod(\tau)$ be isomorphism invariant and $\vec \A=(\A_i)_{i\in\omega}\in K^\omega$. 
    \begin{enumerate}
      \item A \define{learner} $\mathbf{L}$ for $\vec \A$ is a continuous function $K\to\omega^\omega$.\footnote{Here continuous means continuous in the subspace topology on $K$.}
      \item The family $\vec \A$ is \define{explanatory learnable}, short $\ex$-learnable, if there exists a learner $\mathbf{L}$ such that for every $i$ and $\S\cong \A_i$, $\lim_n \mathbf{L}(\S)(n)\downarrow$ and $\A_{\lim_n \mathbf{L}(\S)(n)}\cong \A_i$. 
      \item The family $\vec \A$ is \define{behaviorally correct learnable}, short $\bc$-learnable, if there exists a learner $\mathbf{L}$ such that for every $i$ and $\S\cong \A_i$, $\forall^{\infty} n\  \A_{\mathbf{L}(\S)(n)} \cong\A_i$.
    \end{enumerate}
%
%
		\end{definition}
    \begin{remark}
      In the literature $\ex$-learnability is usually only defined for families $\vec\A$ whose elements are pairwise non-isomorphic. Our definition is more general but all results from the literature for $\ex$-learnability hold in a relativized version for our definition.
    \end{remark}
    \begin{remark}
  The definitions of learners in~\cite{bazhenov2020learning,bazhenov2021learning,rossegger2024} are different from ours but our definition has the advantage that learners are elements of $C(K,\omega^\omega)$. It is not difficult to see that the definitions give rise to equivalent notions of learnability. In~\cite{bazhenov2020learning,bazhenov2021learning} learners are assumed to be functions from finite $\tau$-structures to $\omega$. To see that our definition is equivalent, suppose that $f$ is such a function and define the learner $\mathbf{L}(\S)(n)=f(\S\restrict k)$ where $k$ is maximal less or equal to $n$ such that $f(\S\restrict k)$ is defined or $0$, if $f(\S\restrict k)$ is undefined for every $k\leq n$. Then clearly $\lim_n \mathbf{L}(\S)(n)\downarrow=\lim_n f(\S\restrict n)$. On the other hand, to define $f$ given $\mathbf{L}$ simply let $f(\S\restrict n)=\mathbf{L}(\S)(k)$ where $k$ is largest such that $use(\mathbf{L}(\S)(k))\subseteq S\restrict n$.

    In~\cite{rossegger2024} a learner was defined to be a sequence of continuous functions $f_i:K\to \omega$ and explanatory learning was defined by considering $\lim_{i} f_i$. Again, it is not hard to see that one can computably transform a learner given by our definition to a learner in the sequence definition and vice versa.
  \end{remark}

    Bazhenov, Fokina and San Mauro~\cite{bazhenov2020learning} gave the following syntactic characterization of $\ex$-learnability. We will extend this theorem in \Cref{thm:bfchar}.
\begin{theorem}[{\cite[Theorem 3.1]{bazhenov2020learning}}]
\label{theorem:syntacticcharacterization}
For any family of pairwise non-isomorphic structures $\vec\A$ the following are equivalent.
\begin{enumerate}
    \item $\vec \A$ is $\ex$-learnable;
    \item $\vec\A$ has $\Sinf{2}$ quasi-Scott sentences. That is, $\Sinf{2}$ sentences $(\varphi_i)_{i \in \omega}$ 
      such that $\A_i \models \varphi_j$ if and only if $\A_i\cong\A_j$.
\end{enumerate}
   
\end{theorem}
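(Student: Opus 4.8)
The plan is to prove the two implications separately, exploiting throughout that, since $\vec\A$ is pairwise non-isomorphic, a quasi-Scott sentence $\varphi_j$ must single out $\A_j$ \emph{uniquely} among the family: $\A_i\models\varphi_j$ iff $i=j$.

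\textbf{From learnability to sentences.} Assume $\mathbf L$ is an $\ex$-learner for $\vec\A$. For a finite ordered tuple, the learner's committed guess is a quantifier-free invariant of its atomic diagram (by continuity and finite use), so ``the guess on the ordered configuration $\bar x\bar y$ equals $j$'' is expressible by a quantifier-free formula $\rho_{j}(\bar x,\bar y)$. I would then set
\[
 \varphi_j \;:=\; \bigvee_{k\in\omega}\exists x_0\cdots x_k\ \bigwedge_{l\in\omega}\forall y_1\cdots y_l\ \rho_{j}(x_0,\dots,x_k,y_1,\dots,y_l),
\]
which is $\Sinf{2}$: the inner block is $\Pi^{\mathrm{in}}_1$ and is existentially quantified. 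The point of using element-quantifiers (rather than quantifying over abstract finite structures) is that the universal block only ever constrains the learner on finite configurations that genuinely occur as initial segments of copies of the ambient structure. For $i\neq j$, if $\A_i\models\varphi_j$ then a witnessing tuple $\bar x$ can be extended to a full enumeration $\S\cong\A_i$ on which every guess from stage $|\bar x|$ on is $j$, so $\lim\mathbf L(\S)=j$; $\ex$-learning then forces $\A_j\cong\A_i$, contradicting pairwise non-isomorphism. The reverse inclusion $\A_j\models\varphi_j$ is the only delicate point: I would argue by compactness (a König's-lemma construction) that a single finite initial ordered segment must exist past which \emph{all} continuations keep the guess at $j$, for otherwise one could interleave bad extensions with an enumeration of the domain to build a copy of $\A_j$ carrying infinitely many non-$j$ guesses, contradicting convergence.

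\textbf{From sentences to learnability.} Here I would avoid building the learner by hand and argue descriptive-set-theoretically. By the Lopez--Escobar theorem \cite{kechris2012}, each $\Sinf{2}$ sentence $\varphi_i$ defines an isomorphism-invariant $\boldsymbol\Sigma^0_2$ set $P_i=\{\S:\S\models\varphi_i\}$. On the invariant set $U=\bigcup_i\{\S:\S\cong\A_i\}$ of copies of family members, the quasi-Scott property together with pairwise non-isomorphism guarantees that \emph{exactly one} $\varphi_i$ holds at each point; hence the complement of $P_i$ within $U$ equals $\bigcup_{i'\neq i}P_{i'}$, again $\boldsymbol\Sigma^0_2$, so every level set of the index function $f\colon U\to\omega$ (where $f(\S)$ is the unique $i$ with $\S\models\varphi_i$) is $\boldsymbol\Delta^0_2$. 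A function into discrete $\omega$ with $\boldsymbol\Delta^0_2$ level sets on the zero-dimensional space $Mod(\tau)$ is of Baire class $1$, hence a pointwise limit $f=\lim_n g_n$ of continuous $\omega$-valued functions $g_n$ (defined on all of $Mod(\tau)$, the behaviour off $U$ being irrelevant); setting $\mathbf L(\S)(n)=g_n(\S)$ yields a continuous learner whose guesses converge, on every $\S\cong\A_j$, to $f(\S)=j$. That is precisely $\ex$-learning.

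\textbf{Main obstacle.} The crux is the convergence of the learner in the second direction, the very feature separating $\ex$ from $\bc$. The $\Sinf{2}$ sentences only supply $\boldsymbol\Sigma^0_2$ (limit-semidecidable) information, on which the naive ``output the least currently-satisfied index'' strategy flickers forever, since a false $\varphi_i$ can admit witnesses surviving arbitrarily long before being refuted. Convergence is recovered only because pairwise non-isomorphism makes the true index \emph{unique}, upgrading each level set from $\boldsymbol\Sigma^0_2$ to $\boldsymbol\Delta^0_2$; this is exactly the hypothesis whose removal the rest of the paper analyzes. A secondary technical point is the compactness argument establishing $\A_j\models\varphi_j$ in the first direction.
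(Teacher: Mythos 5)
The paper does not actually prove this theorem: it imports it from \cite{bazhenov2020learning}, and its own proof of the generalization (\Cref{thm:bfchar}) uses the cited result for the direction from learnability to sentences. So the comparison is with the standard argument and with the paper's adjacent sketch. Your first direction is essentially that standard argument: the existential witness is a locking sequence, obtained contrapositively by interleaving bad extensions with an enumeration of $\A_j$ to defeat convergence, and the uniqueness clause uses pairwise non-isomorphism exactly as you say. One technical point: for the inner block to be genuinely $\Pinf{1}$ when the vocabulary is infinite, $\rho_j$ should be phrased as the countable conjunction of the negations of the finitary quantifier-free diagrams of the configurations on which the guess is \emph{not} $j$; a positive infinite disjunction of diagrams under a universal quantifier need not stay at that level.

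Your converse direction takes a genuinely different route, and this is where the one real gap sits. The standard learner outputs the first coordinate of the least unrefuted pair $(i,\bar x)$ with $\S\restrict n\models(\forall\bar y\le n)\,\theta_i(\bar x,\bar y)$; every false pair dies at a finite stage and some true pair $(j,\bar x_0)$ survives, so the output stabilizes on an $i$ with $\S\models\varphi_i$, hence $\A_i\cong\S$. This needs neither pairwise non-isomorphism nor any descriptive set theory, which is why it extends to \Cref{thm:bfchar}. (Your criticism of the naive ``least $i$ with $\S\restrict n\models\varphi_i$'' learner is apt: fresh existential witnesses can keep appearing, so that learner can converge to a false index; the pair trick is the standard repair.) Your Lopez--Escobar route can be made to work, but not as written: the level sets of $f$ are $\pmb\Delta^0_2$ only \emph{relative to} $U$, and $U$, being a countable union of orbits, is in general not $G_\delta$ and hence not Polish, so the Lebesgue--Hausdorff--Banach theorem does not apply off the shelf, and you still owe continuous functions $g_n$ defined on all of $Mod(\tau)$. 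The repair is routine but must be done: use the $\sigma$-reduction property of $\pmb\Sigma^0_2$ to replace the globally overlapping sets $P_i$ by pairwise disjoint $\pmb\Sigma^0_2$ sets $P_i^*$ with $P_i^*\cap U=P_i\cap U$, write $P_i^*=\bigcup_n F_{i,n}$ with $F_{i,n}$ closed and increasing in $n$, and use zero-dimensionality of $Mod(\tau)$ to separate $F_{0,n},\dots,F_{n,n}$ by a clopen partition whose index function is $g_n$. Finally, note that your route leans essentially on pairwise non-isomorphism (to make exactly one $\varphi_i$ hold on each orbit of $U$), so unlike the elementary construction it does not survive in the repeated-structure setting that is the paper's actual subject.
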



  %
 

\section{When $\bc$  translates to $\ex$}
\label{sec:whentranslates}
The following theorem tells us that we have no hope of translating a $\bc$-learner into an $\ex$-learner without knowing the target family we want to learn.

\begin{theorem}
\label{thm:nofunction}
Suppose that there is $K\subseteq Mod(\tau)$ with $\A_1,\A_2\in K$ such that $\A_1\not\equiv_2\A_2$. Then, there is no function $f:C(K,\Baire) \rightarrow C(K,\Baire)$ such that for every $\vec \A\in K^\nat$, if $\mathbf{L}$ is a $\bc$-learner for $\vec \A$ then $f(\mathbf{L})$ is an $\ex$-learner for $\vec \A$.
\end{theorem}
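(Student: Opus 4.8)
The plan is to argue by contradiction: assume such an $f$ exists, and exhibit a single learner $\mathbf L\in C(K,\Baire)$ together with a countable list of target families $(\vec{\mathcal A}^{(k)})_{k\in\omega}\in (K^\omega)^\omega$ such that $\mathbf L$ is a $\bc$-learner for \emph{every} $\vec{\mathcal A}^{(k)}$, while \emph{no} single element of $C(K,\Baire)$ is an $\ex$-learner for all of them simultaneously. Since $f(\mathbf L)$ is one fixed function that must $\ex$-learn each $\vec{\mathcal A}^{(k)}$ (because $\mathbf L$ $\bc$-learns each), this yields the desired contradiction. The point of passing to a whole \emph{team} of families rather than a single one is that, for a fixed family, $\ex$ and $\bc$ coincide, so the non-uniformity can only surface once one learner is forced to serve infinitely many families at once.

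\textbf{The families and why no common $\ex$-learner exists.} Using the two witnesses, I would split indices by parity and shift a threshold in one class. Concretely, for each $k$ put $\vec{\mathcal A}^{(k)}_i=\mathcal A_1$ whenever $i$ is even or $i<2k$, and $\vec{\mathcal A}^{(k)}_i=\mathcal A_2$ whenever $i$ is odd and $i\ge 2k$. Then a copy $\mathcal S\cong\mathcal A_2$ is a member of every $\vec{\mathcal A}^{(k)}$, and its set of correct indices in $\vec{\mathcal A}^{(k)}$ is exactly $\{\,i \text{ odd} : i\ge 2k\,\}$. Hence if some $g$ were a common $\ex$-learner, then $L:=\lim_m g(\mathcal S)(m)$ would exist and would have to be odd and satisfy $L\ge 2k$ for every $k$, which is impossible. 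Thus no common $\ex$-learner can exist. By contrast, $\bc$-learning imposes only that, on $\mathcal A_2$-copies, the guesses be eventually odd and eventually exceed every bound, and on $\mathcal A_1$-copies eventually even; both are satisfiable by a single learner whose magnitude simply diverges while its parity eventually matches the type.

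\textbf{Building the common $\bc$-learner, and the main obstacle.} The remaining task—and the crux—is to produce one continuous $\mathbf L$ with this behaviour. The natural attempt is $\mathbf L(\mathcal S)(m)=2m+b_m(\mathcal S)$, where the magnitude $2m$ forces divergence on \emph{every} input and $b_m(\mathcal S)\in\{0,1\}$ is a type indicator that is eventually $0$ on copies of $\mathcal A_1$ and eventually $1$ on copies of $\mathcal A_2$. Such a $b_m$ is exactly a $\bc$-identifier for the pair $\{\mathcal A_1,\mathcal A_2\}$, and here the hypothesis $\mathcal A_1\not\equiv_2\mathcal A_2$ enters decisively: the two structures are separated by $\Sigma^{\mathrm{in}}_2$ sentences, so by \Cref{theorem:syntacticcharacterization} the (pairwise non-isomorphic) family $\{\mathcal A_1,\mathcal A_2\}$ is learnable, which furnishes the continuous $b_m$. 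The hard part is that at every \emph{finite} stage the two structures are $\Sigma_2$-indistinguishable, so $b_m$ is correct only in the limit; one must therefore convert the $\Sigma_2$ gap into an indicator with clean asymptotics and verify that the resulting outputs land in the correct index sets of every $\vec{\mathcal A}^{(k)}$. This finite-stage indistinguishability is precisely the phenomenon that makes a common $\bc$-learner possible (wandering guesses are tolerated) while forbidding any common $\ex$-learner (convergence is blocked by the shifting thresholds); once $\mathbf L$ is in hand, the ``$L\ge 2k$ for all $k$'' contradiction of the previous paragraph completes the proof.
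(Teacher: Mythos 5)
Your proof is correct and rests on the same core idea as the paper's: use the $\equiv_2$-separation to build a single continuous $\bc$-learner whose guesses have strictly increasing magnitude and eventually correct parity, and then exploit the fact that this one learner $\bc$-learns many families simultaneously while any $\ex$-learner must commit to a single index. The only difference is packaging: you fix the countable collection of threshold-shifted families $\vec{\mathcal A}^{(k)}$ in advance and intersect the correct index sets, whereas the paper uses one family and swaps the single index $N$ to which $f(\mathbf{L})$ converges after the fact; both versions share the same (mild) reliance on the pair $\{\A_1,\A_2\}$ being $\ex$-learnable, which the paper likewise extracts from $\A_1\not\equiv_2\A_2$ via its quasi-Scott sentences.
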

\begin{proof}
Assume that such an $f$ exists. 
Let $\A_1,\A_2$ be two $\tau$-structures such that $\A_1 \not\equiv_2 \A_2$ and consider the family of structures $\vec \A$ where $\vec \A (2i)=\A_1$ and $\vec \A (2i+1)=\A_2$. Without loss of generality, let $\varphi_1=\exists \bar x \forall \bar y \psi(\bar x, \bar y)$ and $\varphi_2=\exists \bar x \forall \bar y \theta(\bar x, \bar y)$ be the $\Sinf{2}$-quasi Scott sentences for $\A_1$ and $\A_2$ respectively where $\psi$ and $\theta$ are quantifier-free formulas.

We order tuples via the ordering given by some standard bijection between tuples and natural numbers.
Given a structure $\esse$ isomorphic either 
to $\A_1$ or $\A_2$, we define $a_s := \min \{\bar x : (\forall \bar y \leq s) \esse\restrict s \models \psi(\bar x,\bar y) \}$ and $b_s := \min \{\bar x : (\forall \bar y \leq s) \esse\restrict s \models \theta(\bar x,\bar y)\}$, and we let $c_{a}(s) := |\{t \leq s : a_s = a_t\}|$ and $c_{b}(s) := |\{t \leq s : b_s = b_t\}|$. We define a learner $\mathbf{L}$ for $\vec \A$ as follows:
\[\mathbf{L}(\esse\restrict s)=\begin{cases}
    2s+2 & \text{if } c_{a}(s)> c_{b}(s)\\
    2s+1 & \text{if } c_{a}(s)\leq c_{b}(s)
\end{cases}\]
Clearly, $\mathbf{L}$ is a $\bc$-learner for $\vec\A$. By our assumption, $f(\mathbf{L})$ is an $\ex$-learner for $\vec\A$ and hence given a structure $\esse \cong \A_1$, $f(\mathbf{L})(\esse)$ will eventually converge to some $N \in \omega$.

Consider the family of structures $\vec\A'$ defined as $\vec \A$ except for the fact that, 
\begin{itemize}
    \item if $N=2k$ for some $k \in \omega$, $\vec \A'(N)=\A_2$;
    \item if $N=2k+1$ for some $k \in \omega$, $\vec \A'(N)=\A_1$.
    \end{itemize}
Clearly $\mathbf{L}$ is still a $\bc$-learner for $\vec\A'$ but we have that $f(\mathbf{L})(\esse)=N$, i.e., $f(\mathbf{L})$ fails to $\ex$-learn $\vec \A'$.
\end{proof}

\Cref{proposition:exequalsbc} will show that such a translation can be pursued if we assume that the family to be learned is given as input to the translating function.

\subsection{Back-and-forth relations and learnability}
We will recast \cref{theorem:syntacticcharacterization} in terms of the asymmetric version of Karp's back-and-forth relations. Asymmetric back-and-forth relations first appeared in~\cite{barwise1973} and their theory was systematically developed by Ash and Knight, see~\cite{ash2000} for details. For any two $\tau$-structures $\A$ and $\B$ and $\alpha<\omega_1$, the \define{$\leq_\alpha$ back-and-forth relations} are inductively defined as follows.
\begin{align*}
(\A,\ba)\leq_0(\B,\bb) \iff& \text{for the first $|\ba|$ formulas $(\phi_i)_{i<|\ba|}$ in an enumeration}\\&\text{of the atomic $\tau$-formulas}\ \A\models \phi_i(\ba) \iff \B\models \phi_i(\bb)\\
  (\A,\ba)\leq_\alpha (\B,\bb) \iff& (\forall \beta<\alpha)\forall \bd \exists \bar c\  (\B,\bb\bd)\leq_\beta (\A,\ba\bar c)
\end{align*}
We write $\A\leq_\alpha \B$ for $(\A,\emptyset)\leq_\alpha (\B,\emptyset)$ and $(\A,\ba)\equiv_\alpha(\B,\bb)$ if $(\A,\ba)\leq_\alpha (\B,\bb)$ and $(\B,\bb)\leq_\alpha (\A,\ba)$.
The single most important fact about asymmetric back-and-forth relations is the following.
\begin{theorem}[Karp~\cite{karp1965b}]
\label{theorem:karp}
For any two $\tau$-structures $\A$ and $\B$ and $\alpha<\omega_1$, $\A\leq_\alpha \B$ if and only if ${\Pinf{\alpha}}\text{-th}((\A,\ba))\subseteq {\Pinf{\alpha}}\text{-th}((\B,\bb))$.
\end{theorem}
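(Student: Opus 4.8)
The plan is to prove the tupled statement $(\A,\ba)\leq_\alpha(\B,\bb)$ if and only if $\thpi{\alpha}{(\A,\ba)}\subseteq\thpi{\alpha}{(\B,\bb)}$ (the displayed form with $\A\leq_\alpha\B$ being the instance of empty tuples) by transfinite induction on $\alpha$, with the induction hypothesis asserting the full equivalence at every $\beta<\alpha$ for \emph{all} structures and tuples, since the inductive step will apply it to the shifted pairs $(\B,\bb\bd)$ and $(\A,\ba\bar c)$. The engine throughout is the $\Sinf{}$/$\Pinf{}$ duality: negating a $\Pinf{\beta}$ formula gives a formula logically equivalent to a $\Sinf{\beta}$ one and vice versa, so $\thpi{\beta}{(\A,\ba)}\subseteq\thpi{\beta}{(\B,\bb)}$ is equivalent to the reverse inclusion of $\Sinf{\beta}$-theories $\mathrm{Th}_{\Sinf{\beta}}((\B,\bb))\subseteq\mathrm{Th}_{\Sinf{\beta}}((\A,\ba))$. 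For the base case $\alpha=0$ I would merely unwind definitions: $\leq_0$ demands agreement on the relevant atomic formulas, which is exactly coincidence of the quantifier-free ($\Pinf{0}$) theories, the only bookkeeping being the convention bounding the number of atomic formulas inspected.

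For the forward direction of the inductive step, assume $(\A,\ba)\leq_\alpha(\B,\bb)$ and take $\varphi\in\Pinf{\alpha}$ with $\A\models\varphi(\ba)$. Write $\varphi$ as a countable conjunction of formulas $\forall\bar y\,\psi_k(\bar x,\bar y)$ with $\psi_k\in\Sinf{\beta_k}$ and $\beta_k<\alpha$, fix $k$, and fix an arbitrary tuple $\bd$ in $\B$. The defining clause of $\leq_\alpha$ (at $\beta_k$) produces $\bar c$ in $\A$ with $(\B,\bb\bd)\leq_{\beta_k}(\A,\ba\bar c)$; the induction hypothesis at $\beta_k$ together with duality upgrades this to $\mathrm{Th}_{\Sinf{\beta_k}}((\A,\ba\bar c))\subseteq\mathrm{Th}_{\Sinf{\beta_k}}((\B,\bb\bd))$. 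Since $\A\models\psi_k(\ba,\bar c)$ and $\psi_k$ is $\Sinf{\beta_k}$, this transfers to $\B\models\psi_k(\bb,\bd)$; as $\bd$ and $k$ were arbitrary, $\B\models\varphi(\bb)$.

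The backward direction is the crux. Assuming $\thpi{\alpha}{(\A,\ba)}\subseteq\thpi{\alpha}{(\B,\bb)}$, I want to verify the clause defining $(\A,\ba)\leq_\alpha(\B,\bb)$, so I suppose toward a contradiction that for some $\beta<\alpha$ and some $\bd$ in $\B$ there is no $\bar c$ in $\A$ with $(\B,\bb\bd)\leq_\beta(\A,\ba\bar c)$. By the induction hypothesis, each failure $(\B,\bb\bd)\not\leq_\beta(\A,\ba\bar c)$ yields a $\Pinf{\beta}$ formula true of $(\B,\bb\bd)$ but false of $(\A,\ba\bar c)$; negating it gives a $\Sinf{\beta}$ formula $\tau_{\bar c}(\bar x,\bar y)$ with $\A\models\tau_{\bar c}(\ba,\bar c)$ and $\B\not\models\tau_{\bar c}(\bb,\bd)$. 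Because the universe is $\omega$, there are only countably many such $\bar c$, so $\varphi(\bar x):=\forall\bar y\,\bigvee_{\bar c}\tau_{\bar c}(\bar x,\bar y)$ is a legitimate $\Pinf{\alpha}$ formula. It holds in $(\A,\ba)$—for any value of $\bar y$, take the disjunct indexed by the $\bar c$ equal to that value—yet fails in $(\B,\bb)$ at the fixed witness $\bd$, contradicting the assumed inclusion. Hence the clause holds and $(\A,\ba)\leq_\alpha(\B,\bb)$.

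The main obstacle is precisely this last construction. One must extract the separating formulas from the induction hypothesis \emph{in the correct variance}—the asymmetric definition swaps the roles of $\A$ and $\B$, so the relation one dualizes is $(\B,\bb\bd)\not\leq_\beta(\A,\ba\bar c)$, not its mirror—then confirm that the resulting disjunction over the countably many tuples $\bar c$ lands exactly in $\Pinf{\alpha}$ rather than one level too high, and check that the single witness $\bd$ simultaneously defeats every disjunct. The remaining ingredients, namely the $\Sinf{}$/$\Pinf{}$ duality and the matching of free with bound variables when passing between $\forall\bar y\,\psi_k$ and the $\leq_\alpha$ clause, are routine.
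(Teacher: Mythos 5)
The paper gives no proof of this statement: it is quoted as a classical result of Karp (with the modern asymmetric formulation due to Ash--Knight), so there is no in-paper argument to compare against. Your proof is the standard one and is correct: a simultaneous transfinite induction on the full tupled equivalence, with the forward direction pulling each conjunct $\forall \bar y\,\psi_k$ through the back-and-forth clause via the $\Sinf{\beta_k}$/$\Pinf{\beta_k}$ duality, and the backward direction building the separating sentence $\forall \bar y \bigvee_{\bar c}\tau_{\bar c}$ from the countably many failures, which lands in $\Pinf{\alpha}$ precisely because each $\tau_{\bar c}$ is $\Sinf{\beta}$ for some $\beta<\alpha$. The only points needing care are the ones you already flag: one should fix $|\bar c|=|\bd|=|\bar y|$ when forming the disjunction, and the base case only matches $\Pinf{0}$-theory containment up to the paper's truncation convention on atomic formulas (so the clean statement is really for $\alpha\geq 1$, which is all the paper ever uses).
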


\begin{theorem}\label{thm:bfchar}
  For any family of structures $\vec\A$ the following are equivalent.
  \begin{enumerate}
    \item\label{it:bfchar_ex} $\vec \A$ is $\ex$-learnable
    \item\label{it:bfchar_bc} $\vec \A$ is $\bc$-learnable
    \item\label{it:bfchar_bf} $\forall i \exists \ba \forall j \forall\bb (\A_i,\ba)\not\leq_1 (\A_j,\bb)\lor \A_i\cong \A_j$
    \item\label{it:bfchar_s2} $\vec \A$ has $\Sinf{2}$ quasi-Scott sentences.
  \end{enumerate}
  Furthermore, if the structures in $\vec \A$ are pairwise non-isomorphic, then (3) can be replaced by
  \begin{enumerate}
    \item[(3a)]\label{it:bfcar_bfnorep} $\forall i \exists \ba \forall j\neq i\forall\bb (\A_i,\ba)\not\leq_1 (\A_j,\bb)$.
  \end{enumerate}
\end{theorem}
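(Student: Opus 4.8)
The plan is to close the cycle $(4)\Rightarrow(1)\Rightarrow(2)\Rightarrow(3)\Rightarrow(4)$. The implication $(1)\Rightarrow(2)$ is immediate, since convergence of $\mathbf{L}(\S)$ to an index $i$ with $\A_i\cong\S$ forces $\A_{\mathbf{L}(\S)(n)}\cong\S$ for cofinitely many $n$. The two constructive implications $(3)\Rightarrow(4)$ and $(4)\Rightarrow(1)$ are essentially the content of \Cref{theorem:syntacticcharacterization} recast through \Cref{theorem:karp}, so I expect them to be routine; the genuinely new work is $(2)\Rightarrow(3)$, which I would prove in contrapositive form as ``$\lnot(3)\Rightarrow$ not $\bc$-learnable''.

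For $(3)\Rightarrow(4)$ I would, for each $i$, take the tuple $\ba$ witnessing $(3)$ and let $\pi_i(\bar x)$ be the $\Pinf{1}$-type of $(\A_i,\ba)$, i.e.\ the (countable, hence $\Pinf{1}$) conjunction of all $\Pinf{1}$ formulas true of $(\A_i,\ba)$; set $\varphi_i=\exists\bar x\,\pi_i(\bar x)$, which is $\Sinf{2}$. By \Cref{theorem:karp}, $\A_j\models\varphi_i$ holds iff there is $\bb$ with $\Pinf{1}\text{-th}(\A_i,\ba)\subseteq\Pinf{1}\text{-th}(\A_j,\bb)$, i.e.\ iff $(\A_i,\ba)\leq_1(\A_j,\bb)$ for some $\bb$; by $(3)$ this forces $\A_i\cong\A_j$, while the converse is clear, so $(\varphi_i)_{i\in\omega}$ are $\Sinf{2}$ quasi-Scott sentences. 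For $(4)\Rightarrow(1)$ I would run the standard least-index learner: given $\S\cong\A_{j_0}$, at stage $s$ the learner proposes the least $i\le s$ for which some witness $\bar b$ makes $\pi_i(\bar b)$ unrefuted by $\S\restrict s$, using a counter to discard indices whose current witness gets refuted. Since $\varphi_i$ is $\Sinf{2}$ and the matrix $\pi_i$ is $\Pinf{1}$, refutations are detectable at finite stages; for $i$ below the least index $i^{*}$ with $\A_{i^{*}}\cong\A_{j_0}$ all witnesses are eventually refuted, while $\A_{i^{*}}\models\varphi_{i^{*}}$ supplies a permanent witness, so the learner converges to $i^{*}$.

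The main obstacle is $(2)\Rightarrow(3)$. Assuming $\lnot(3)$, fix $i$ such that for every finite tuple $\ba$ in $\A_i$ there are $j$ and $\bb$ with $\A_i\not\cong\A_j$ and $(\A_i,\ba)\leq_1(\A_j,\bb)$. Given an arbitrary learner $\mathbf{L}$, I would build a presentation $\S\cong\A_i$, via a bijection $\omega\to|\A_i|$ defined in rounds, on which $\mathbf{L}$ outputs an index $k$ with $\A_k\not\cong\A_i$ infinitely often, contradicting $\bc$-learnability. In round $n$, with current honest tuple $\ba^{(n)}$, I apply $\lnot(3)$ to obtain $\A_{j_n}\not\cong\A_i$ and $\bb^{(n)}$ with $(\A_i,\ba^{(n)})\leq_1(\A_{j_n},\bb^{(n)})$, fix a presentation $Q_n$ of $\A_{j_n}$ extending $\bb^{(n)}$ on which $\mathbf{L}$ is eventually $\cong\A_{j_n}$, and use the forth-clause of $\leq_1$ to realize a long enough finite prefix of $Q_n$ by genuine $\A_i$-elements over $\ba^{(n)}$, so that $\S$'s finite view coincides with that prefix and $\mathbf{L}$ is driven to guess $\cong\A_{j_n}$. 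I would then append the next unused element of $\A_i$ to guarantee $\S\cong\A_i$ in the limit and iterate. The delicate point, which I expect to be where the real care is needed, is the bookkeeping between rounds: since $\leq_1$ is only a single back-and-forth level it matches finite configurations but not coherently along an infinite enumeration, so I must re-localize the argument at the new honest tuple each round and verify that the cumulative atomic diagram committed so far remains simultaneously a true $\A_i$-diagram and an initial segment of the chosen $\A_{j_n}$-presentation; getting the direction of $\leq_1$ and the treatment of the shared parameters exactly right is the crux.

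Finally, the addendum is a short remark: when the structures in $\vec\A$ are pairwise non-isomorphic, the disjunct $\A_i\cong\A_j$ in $(3)$ is true precisely when $j=i$, where it is needed because $(\A_i,\ba)\leq_1(\A_i,\ba)$ always holds by reflexivity; hence for every $j\neq i$ the first disjunct must hold, which is exactly $(3a)$, and conversely $(3a)$ together with the trivial case $j=i$ yields $(3)$.
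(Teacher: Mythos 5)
Your proposal is correct in outline but routes the argument through a genuinely different, and much harder, decomposition than the paper. The shared parts are fine: $(1)\Rightarrow(2)$ is trivial, and your $(3)\Rightarrow(4)$ and $(4)\Rightarrow(3)$-style use of \Cref{theorem:karp} (taking $\varphi_i=\exists\bar x\,\pi_i(\bar x)$ with $\pi_i$ the $\Pinf{1}$-type of the witnessing tuple) is essentially verbatim the paper's argument, as is the least-witness learner for $(4)\Rightarrow(1)$ (modulo the usual care that unrefuted witnesses must be tracked as pairs $\langle i,\ba\rangle$ or with a change-counter, so that an index whose witnesses are refuted cofinally does not get proposed infinitely often; the paper is equally terse here). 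The real divergence is that your cycle forces you to prove $(2)\Rightarrow(3)$ by a direct diagonalization, whereas the paper closes the loop with a one-line non-uniform trick for $(2)\Rightarrow(1)$: given a $\bc$-learner $\mathbf{L}$ for the fixed family $\vec\A$, set $\mathbf{L}'(\esse)(n):=\mu i\,[\A_i\cong\A_{\mathbf{L}(\esse)(n)}]$. This is continuous (it is $\mathbf{L}$ post-composed with a coordinatewise map $\omega\to\omega$), and since cofinitely many guesses land in the correct isomorphism class, $\mathbf{L}'$ stabilizes on the least index of that class. The paper then gets $(1)\Rightarrow(4)$ by passing to the pairwise non-isomorphic subfamily and invoking \Cref{theorem:syntacticcharacterization}, so every edge of its implication graph is easy. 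The non-uniformity of this trick is exactly what \Cref{thm:nofunction} and \Cref{proposition:exequalsbc} are about, so it is not a cheat in context.

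Concerning your $(2)\Rightarrow(3)$ sketch: the strategy is viable --- $\neg(3)$ gives, at every honest tuple $\ba^{(n)}$ of the presentation of $\A_i$ under construction, some $\A_{j_n}\not\cong\A_i$ and $\bb^{(n)}$ with $(\A_i,\ba^{(n)})\leq_1(\A_{j_n},\bb^{(n)})$; taking the universal quantifier in the definition of $\leq_1$ over long enough $\bd$ shows that $\ba^{(n)}$ and $\bb^{(n)}$ have identical atomic diagrams and that any finite initial segment of a presentation $Q_n$ of $\A_{j_n}$ beginning with $\bb^{(n)}$ can be realized exactly by elements of $\A_i$ over $\ba^{(n)}$, long enough to cover the use of one more correct (hence, on $\S$, incorrect) guess of $\mathbf{L}$ on $Q_n$. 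But you have left precisely this verification --- the point you yourself identify as the crux --- undone, so as written the proposal does not yet constitute a proof of the hardest implication. Before investing in that bookkeeping, notice that it can be avoided entirely by the relabeling trick above.
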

\begin{proof}
  The fact that \cref{it:bfchar_ex} implies \cref{it:bfchar_bc} is trivial. To see that the converse implication holds, given a $\bc$-learner $\mathbf{L}$ for a family of structures $\vec \A$,  
  we define an $\ex$-learner $\mathbf{L}'$ for $\vec \A$ as follows. Given a structure $\esse$, let $\mathbf{L}'(\esse)(n):= \mu i [\A_i \cong \mathbf{L}(\esse)(n)]$. It is clear that $\mathbf{L}$ $\ex$-learns $\vec \A$.
  
  The fact that \cref{it:bfchar_s2} can be replaced by Item 3a if the structures in $\vec\A$ are pairwise non-isomorphic is immediate.

 We now prove that \cref{it:bfchar_ex} implies \cref{it:bfchar_s2}. Let $\vec\A$ be an $\ex$-learnable family of structures. Let $\vec\A'=(\A'_i)_{i \in I}$ for $I$ being either $\omega$ or an initial segment of $\omega$ be the family of structures defined by $\A_{i}'=\A_{m}$ where $m:=\mu j[\A_j \not\cong \A_k' \text{ for every }k < i]$. Clearly $\vec\A'$ is a family of pairwise non-isomorphic structures and hence, by \cref{theorem:syntacticcharacterization} it has $\Sinf{2}$ quasi-Scott sentences $(\varphi_i)_{i \in I}$. For any $i$, let $\psi_i:=\varphi_{k}$ if and only if $\A_i \cong \A_k'$. By definition of $\vec\A'$ for every $i$ there is a unique such $k$. It is immediate that $(\psi_i)_{i \in \omega}$ are the desired $\Sinf{2}$ quasi-Scott sentences for $\vec \A$.

 The proof that \cref{it:bfchar_s2} implies \cref{it:bfchar_ex} is a standard argument. We will sketch it here. Let $(\varphi_i)_{i \in \omega}$ be $\Sinf{2}$ quasi-Scott sentences for $\vec \A$ where without loss of generality $\varphi_i=\exists \bar x \forall \bar y \theta_i(\bar x,\bar y)$. Define $\mathbf{L}(\S)(n)=\mu i [ \S\restrict n \models \varphi_i]$. Then it is not hard to see that if $\S\cong \A_i\in \vec \A$, then $\lim\mathbf{L}(\S)$ is the least $j$ such that $\S\cong \A_j\in \vec \A$.

 To prove that \cref{it:bfchar_s2} implies \cref{it:bfchar_bf}, let $(\varphi_i)_{i \in \omega}$ be $\Sinf{2}$ quasi-Scott sentences for $\vec\A$. Without loss of generality $\varphi_i$ is of the form $\exists \overline{x} \psi_i$ for $\psi_i$ a $\Pinf{1}$-formula.
 Since $\varphi_i$ is a quasi-Scott sentence, there is some $\overline{a}$ such that $(\A_i,\overline{a}) \models \psi_i$. We claim that for every $j \neq i$ and for every $\overline{b}$, either $\A_i \cong \A_j$ or $\thpi{1}{(\A_i,\overline{a})}\not\subseteq \thpi{1}{(\A_j, \overline{b})}$, which, by \cref{theorem:karp}, is equivalent to $(\A_i,\overline{a}) \nleq_1 (\A_j,\overline{b})$. So suppose that $\A_i \not\cong \A_j$, then by $\phi_i$ being a quasi-Scott sentence for $\A_i$, for every $j$ and $\overline{b}$, $(\A_j,\overline{b})\not\models \psi_i$. Hence, $\thpi{1}{(\A_i,\overline{a})}\not\subseteq \thpi{1}{(\A_j, \overline{b})}$ and so $(\A_i,\bar a)\not\leq_1(\A_j,\bar b)$ for every $\bar b$ and $\A_j\not\cong \A_i$ as required.

 To conclude the proof of the theorem we now prove that \cref{it:bfchar_bf} implies \cref{it:bfchar_s2}. Let $f: \omega \rightarrow \baire$ be the Skolem function that given $i$ outputs the corresponding $\overline{a}$ witnessing that for every $j$ and for every $\overline{b}$, $(\A_i,\overline{a}) \nleq_1 (\A_j,\overline{b})$ or $\A_i \cong \A_j$. So let $\psi_i$ be the conjunction of all finite universal formulas that are true of $(\A_i,f(i))$ and let $\phi_i := \exists \overline{x} \psi_i(\overline{x})$. Since $(\A_i,f(i)) \models \psi_i(f(i))$, we have that $\A_i \models \phi_i$. We claim that $(\phi_i)_{i \in \omega}$ are the desired $\Sinf{2}$ quasi-Scott sentences. To prove this fix $i$, and given $\A_j\not\cong \A_i$ suppose that $\A_j \models \varphi_i$. By definition of $\varphi_i$, this means that there is a $\overline{b}$ such that $(\A_j,\overline{b}) \models \psi_i(\overline{b})$. Since $\psi_i$ was defined to be the conjunction of all universal formulas that are true in $(\A_i,f(i))$, this means that $\thpi{1}{\A_i}\subseteq \thpi{1}{\A_j}$ and, by \cref{theorem:karp}, we have that $(\A_i,f(i)) \leq_1 (\A_j,\overline{b})$, contradicting our assumption.
\end{proof}

The proof showing that \cref{it:bfchar_bc} implies \cref{it:bfchar_ex,} in \Cref{thm:bfchar} can be adapted to prove the following proposition.


 \begin{proposition}
  \label{proposition:exequalsbc}
  Let $K\subseteq Mod(\tau)$ be isomorphism invariant. 
  There is a Borel function $f: K^\omega \times C(K,\Baire) \rightarrow K^\omega \times C(K,\Baire)$ such that if $\mathbf L$ is a  $\bc$-learner for $\vec \A\in K^\omega$ then $f(\vec\A,\mathbf L)$ is an $\ex$-learner for $\vec \A$.
  
  \end{proposition}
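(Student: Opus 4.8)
The plan is to make effective the construction from the implication \cref{it:bfchar_bc}$\Rightarrow$\cref{it:bfchar_ex} of \cref{thm:bfchar}. There, from a $\bc$-learner $\mathbf{L}$ one defines the $\ex$-learner $\mathbf{L}'(\esse)(n):=\mu i[\A_i\cong\A_{\mathbf{L}(\esse)(n)}]$, and the correctness argument is untouched by definability concerns: if $\mathbf{L}$ is a $\bc$-learner and $\esse\cong\A_j$, then $\A_{\mathbf{L}(\esse)(n)}\cong\esse$ for all but finitely many $n$, so $\mathbf{L}'(\esse)$ is eventually the constant $\mu i[\A_i\cong\A_j]$, an index of a structure isomorphic to $\esse$. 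Since $f$ acts as the identity on the first coordinate and must only be \emph{correct} when $\mathbf{L}$ is a $\bc$-learner, the whole task is to produce the assignment $(\vec\A,\mathbf{L})\mapsto\mathbf{L}'$ by a Borel (hence in particular total) function, doing something harmless off the set of $\bc$-learners.

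The first thing I would do is remove the isomorphism relation, which is $\Sigma^1_1$ and in general not Borel, in favour of the back-and-forth relations $\leq_1$, which are Borel uniformly in the two structures and the two tuples; this is precisely why \cref{theorem:syntacticcharacterization} was recast through $\leq_1$. Since $\mathbf{L}$ is a $\bc$-learner, \cref{it:bfchar_bf} holds, so each index $i$ admits a witnessing tuple $\ba_i$ with $(\A_i,\ba_i)\not\leq_1(\A_j,\bb)$ whenever $\A_i\not\cong\A_j$. For any such witness one has the equivalence $\A_i\cong\A_j\iff\exists\bb\,(\A_i,\ba_i)\leq_1(\A_j,\bb)$: the forward direction is preservation of $\Pi_1$-types under isomorphism, and the backward direction is the witnessing property via \cref{theorem:karp}. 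As the right-hand side is a countable union of Borel sets it is Borel, so, \emph{given} the witnesses, the canonicalization $k\mapsto\mu i[\exists\bb\,(\A_i,\ba_i)\leq_1(\A_k,\bb)]$, and hence the learner $\mathbf{L}'(\esse)(n):=\mu i[\exists\bb\,(\A_i,\ba_i)\leq_1(\A_{\mathbf{L}(\esse)(n)},\bb)]$, are Borel in $(\vec\A,\mathbf{L},\esse)$; the only ingredients besides $\leq_1$ are the continuous evaluations $(\mathbf{L},\esse,n)\mapsto\mathbf{L}(\esse)(n)$ and $(\vec\A,k)\mapsto\A_k$ together with countably many Boolean operations.

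The crux is to obtain the witnesses $\ba_i$ themselves by a Borel function of $(\vec\A,\mathbf{L})$. Their defining condition quantifies over isomorphism, so the set of witnesses for a fixed $i$ is only $\Sigma^1_1$ and selecting the least one is not a Borel operation; this is exactly the non-uniformity isolated by \cref{thm:nofunction} and \cref{thm:repnorep}, and it is here that the hypothesis ``$\mathbf{L}$ is a $\bc$-learner'' must do real work. My plan is to build the witnesses in the limit inside $\mathbf{L}'$ rather than as a preprocessing step: while running $\mathbf{L}$ on $\esse$, maintain finite approximations $\ba_i^{(n)}$, discarding a candidate as soon as some pair $(j,\bb)$ appears with $(\A_i,\ba_i^{(n)})\leq_1(\A_j,\bb)$ but $\A_j\not\cong\A_i$, where the test $\A_j\not\cong\A_i$ is screened through the guesses $\mathbf{L}(\A_j)(m)$, which are eventually correct indices for $\A_j$. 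Since genuine witnesses exist these approximations should stabilise, and because $\ex$-learning constrains only the eventual value of $\mathbf{L}'(\esse)$, the errors made before stabilisation are harmless. I expect the main difficulty to be precisely the verification that this screening is simultaneously Borel and correct---that a $\bc$-learner really does let one replace the $\Sigma^1_1$ isomorphism quantifier in the witness condition by a Borel one---whereas the remaining checks, that the resulting $f$ is Borel and that each $\mathbf{L}'$ is continuous, are routine.
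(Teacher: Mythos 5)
Your overall direction is right---replace the $\Sigma^1_1$ relation $\cong$ in $\mu i[\A_i\cong\A_{\mathbf{L}(\S)(n)}]$ by a Borel back-and-forth condition---but the proof is not closed, and the step you yourself flag as the crux does not go through as described. Your plan hinges on Borel-selecting, for each $i$, a witness tuple $\ba_i$ as in \cref{it:bfchar_bf} of \cref{thm:bfchar}, and then ``screening'' candidate witnesses by testing $\A_j\not\cong\A_i$ through the guesses $\mathbf{L}(\A_j)(m)$. This screening is circular: $\mathbf{L}(\A_j)(m)$ only supplies (eventually) some index $k$ with $\A_k\cong\A_j$, and a $\bc$-learner is free to output different such indices forever; deciding from this whether $\A_j\cong\A_i$ is again an instance of the isomorphism problem on the family, which is exactly what you are trying to avoid. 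You also give no mechanism guaranteeing that the approximations $\ba_i^{(n)}$ stabilize, which is essential since a wrong witness that is never discarded (or an infinite sequence of discards) destroys convergence of $\mathbf{L}'(\S)$.

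The missing observation is that no witness selection is needed at all: you can absorb the existential quantifier over $\ba$ into a universal one and work with the relation $\equiv_2$ directly. On any $\bc$-learnable family, \cref{it:bfchar_bf} of \cref{thm:bfchar} forces $\equiv_2$ to coincide with $\cong$ among the $\A_i$: if $\A_j\leq_2\A_i$ then for \emph{every} $\ba$ there is $\bb$ with $(\A_i,\ba)\leq_1(\A_j,\bb)$, so \cref{it:bfchar_bf} leaves only the alternative $\A_i\cong\A_j$. Since $\equiv_2$ is arithmetic (it is $\Pi^0_4$ uniformly in the pair of structures), the map
\[
  f(\vec\A,\mathbf{L})(\S)(n):=\mu i\,[\,\A_i\equiv_2\A_{\mathbf{L}(\S)(n)}\,]
\]
is Borel in $(\vec\A,\mathbf{L})$ outright, and correctness is your own first paragraph verbatim: whenever $\A_{\mathbf{L}(\S)(n)}\cong\A_{\mathbf{L}(\S)(m)}$ the two outputs agree, so the guesses stabilize on the least $\equiv_2$-equivalent (hence isomorphic) index. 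This is the paper's proof; your version gets stuck precisely because fixing a single witness per index reintroduces a uniformization problem that the universal quantifier dissolves.
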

  \begin{proof}
  
    Suppose that $\vec\A$ is $\bc$-learnable by the learner $\mathbf L$. Define $f$ by 
    \[f(\vec \A,\mathbf{L})(\S)(n):=\mu i [ \vec\A_i\equiv_2 \mathbf{L}(\S)(n)].\]
    
    As $\equiv_2$ is $\Pi^0_4$, $f$ is Borel and clearly if $\mathbf L$  $\bc$-learns $\vec \A$, then $f(\vec \A,\mathbf{L})$  $\ex$-learns $\vec \A$. Indeed, if $\vec \A_{\mathbf{L}(\S)(n)}\cong\vec \A_{\mathbf{L}(\S)(m)}$, then $f(\vec \A,\mathbf{L})(\S)(n)=f(\vec \A,\mathbf{L})(\S)(m)$.
  \end{proof}
  The proof of this theorem can be adapted to give some insight into a question arising in~\cite{rossegger2024}. There the learnability of arbitrary equivalence relations on Polish spaces was studied. Taking into account the proposed changes remarked after \cref{def:learning}, we say that an equivalence relation $E$ on a Polish space is \define{uniformly $\ex$-learnable} if there is a function $\mathbf{L}\in C(X^\nat\times X,\Baire)$ such that for all $\vec x\in X^\nat$ and $x\in X$ if there is $m$ so that $\vec x_m\mathrel{E} x$, then $\lim_n \mathbf{L}(\vec x,x)\downarrow=l$ and $\vec x_l\mathrel{E} x$. Similarly, an equivalence relation $E$ is \define{uniformly $\bc$-learnable} if there is a function $\mathbf{L}\in C(X^\nat\times X,\Baire)$ such that for all $\vec x\in X^\nat$ and $x\in X$ if there is $m$ so that $\vec x_m\mathrel{E} x$, then for cofinitely many $l$ $x_{\mathbf{L}(\vec x, x)(l)}\mathrel{E} x$. It was shown in~\cite{rossegger2024} that an equivalence relation is uniformly $\bc$-learnable if and only if it is uniformly $\ex$-learnable. While it is clear that uniform $\ex$-learnability implies uniform $\bc$-learnability, the other implication is quite involved. In the proof one first obtains a $\bSigma^0_2$-definition that works for $E$ modulo countably many $E$-classes. By non-uniformly fixing the definitions for these $E$-classes, one obtains a Borel definition, which can then be shown to be $\bSigma^0_2$ after all. The latter is enough to guarantee $\ex$-learnability of $E$. As one can hopefully see from this sketch, the proof is not very direct and one might wonder whether there is a direct proof which skips the non-uniform part of the construction. We can show that in the case where $E$ is isomorphism, this can be achieved.
  \begin{proposition}\label{prop:uniformexequalsbc}
    Let $K\subseteq Mod(\tau)$ be isomorphism invariant. Then there is a Borel function $f: C(K^\nat\times K,\Baire)\to C(K^\nat\times K,\Baire)$ such that if $\mathbf{L}$ uniformly $\bc$-learns $\cong\restrict K$, then $f(\mathbf{L})$ uniformly $\ex$-learns $\cong\restrict K$. 
  \end{proposition}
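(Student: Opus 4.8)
We want a Borel function $f$ that transforms uniform $\bc$-learners for $\cong\restrict K$ into uniform $\ex$-learners. The crucial difference from the non-uniform setting (Proposition~\ref{proposition:exequalsbc}) is that here the learner receives the sequence $\vec x$ as part of its input, so the function $f$ does not get to see a fixed $\vec\A$ separately — it must produce a single continuous functional that works for all $\vec x\in K^\nat$ simultaneously.

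**The obstacle.** The naive approach mimicking Proposition~\ref{proposition:exequalsbc} would set $f(\mathbf L)(\vec x,x)(n):=\mu i[\,\vec x_i\equiv_2 \vec x_{\mathbf L(\vec x,x)(n)}\,]$, collapsing guesses into $\equiv_2$-least indices. The hope is that since $\mathbf L$ is eventually always outputting indices of elements $E$-equivalent to $x$, these indices are pairwise $\cong$, hence pairwise $\equiv_2$, and thus $f(\mathbf L)$ stabilizes on the $\equiv_2$-least such index. Let me check this carefully.

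Let me denote by $m_n:=\mathbf L(\vec x,x)(n)$ the $\bc$-guesses. For cofinitely many $n$ we have $\vec x_{m_n}\mathrel E x$, so all these $\vec x_{m_n}$ are pairwise $E$-equivalent, hence pairwise $\equiv_2$. The reduced guess is $g_n:=\mu i[\vec x_i\equiv_2 \vec x_{m_n}]$. For the cofinitely many good $n$, the set $\{i:\vec x_i\equiv_2\vec x_{m_n}\}$ is the *same* set (it only depends on the $\equiv_2$-class of $\vec x_{m_n}$, which is constant), so $g_n$ takes a constant value, call it $g^*$, and $\vec x_{g^*}\equiv_2\vec x_{m_n}$. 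Now the worry: does $\vec x_{g^*}\mathrel E x$? We only know $\vec x_{g^*}\equiv_2\vec x_{m_n}\mathrel E x$, and $\equiv_2$ need not imply $E=\,\cong$. This is exactly where the argument could break, and it is the main obstacle.

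**Resolving the obstacle.** The fix is to replace $\equiv_2$ with full $\cong$ inside the minimization. Define
\[
  f(\mathbf L)(\vec x,x)(n):=\mu i\,[\,\vec x_i\cong \vec x_{\mathbf L(\vec x,x)(n)}\,].
\]
Now for the cofinitely many good $n$, all $\vec x_{m_n}$ lie in the single $\cong$-class of $x$, so $g_n=\mu i[\vec x_i\cong x]$ is constant, equal to some $g^*$ with $\vec x_{g^*}\cong x$, giving genuine $\ex$-convergence. The remaining task is to verify that $f$ is Borel as a map $C(K^\nat\times K,\Baire)\to C(K^\nat\times K,\Baire)$. First I would check that $f(\mathbf L)$ is continuous for each fixed $\mathbf L$: the guess at stage $n$ depends on $\mathbf L(\vec x,x)(n)$ (continuous) and on the $\cong$-relations among finitely many coordinates $\vec x_0,\dots,\vec x_{m_n}$ — but $\cong$ is not continuous, indeed not even Borel in general, so one must be careful. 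I would argue that although $\cong$ is only analytic, the evaluation $\mathbf L\mapsto f(\mathbf L)$ is Borel by expressing, for each basic clopen condition on the output, the preimage using the $\Sigma^1_1$ isomorphism relation together with the continuous evaluation $\mathbf L\mapsto \mathbf L(\vec x,x)(n)$; since we compare fixed coordinates determined by a finite amount of the input, the relevant predicate $\vec x_i\cong\vec x_j$ defines a $\Sigma^1_1$ (hence universally measurable, and on the appropriate effective reading Borel) set in the product, and the $\mu$-operator over finitely many such predicates preserves this. The key point making $f$ \emph{Borel} rather than merely universally measurable is that we never need to evaluate $\cong$ beyond the indices bounded by the continuous output $\mathbf L(\vec x,x)(n)$, so the quantifier search is bounded and the only genuinely non-Borel ingredient, $\cong$ itself, enters through a fixed $\Sigma^1_1$ set whose sections we probe at continuously-given points. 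I expect verifying this Borel-measurability claim to be the technically delicate part; the correctness of the $\ex$-learning (the stabilization argument above) is the conceptually clean core, exactly paralleling the proof that~\ref{it:bfchar_bc} implies~\ref{it:bfchar_ex} in Theorem~\ref{thm:bfchar}.
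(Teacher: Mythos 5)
Your diagnosis of the obstacle is correct, but your resolution goes in the wrong direction and is where the proof breaks. Replacing $\equiv_2$ by $\cong$ in the minimization makes the stabilization argument trivial, but for a general isomorphism-invariant $K$ (e.g.\ $K=Mod(\tau)$ with $\tau$ containing a binary relation symbol) the single predicate $\vec x_i\cong\vec x_j$ is $\pmb\Sigma^1_1$-complete in $\vec x$, hence genuinely non-Borel. Your claim that bounding the index search makes the relevant $\pmb\Sigma^1_1$ set ``on the appropriate effective reading Borel'' is false: the boundedness of the $\mu$-search is irrelevant, since already $\vec x_0\cong\vec x_1$ is non-Borel, and universal measurability is not Borelness. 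Consequently your $f(\mathbf L)$ is in general not even a Borel function of $(\vec x,x)$ (so it does not land in $C(K^\nat\times K,\Baire)$), and the map $\mathbf L\mapsto f(\mathbf L)$ cannot be Borel. One could try to argue that uniform $\bc$-learnability forces $\cong\restrict K$ to be $\bSigma^0_2$, but that is precisely the indirect, partly non-uniform route from \cite{rossegger2024} that this proposition is meant to bypass, and it would not tell you how to define $f$ on the learners that fail to learn.

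The intended fix keeps $\equiv_2$ and closes the gap you spotted by appealing to \cref{thm:bfchar}. The paper sets $f(\mathbf{L})(\vec x,x)(n):=\mu i[\vec x_i\equiv_2 \vec x_{\mathbf{L}(\vec x,x)(n)}]$, exactly as in \cref{proposition:exequalsbc}; since $\equiv_2$ is $\Pi^0_4$ this keeps everything Borel. For correctness, note that if $\mathbf{L}$ uniformly $\bc$-learns $\cong\restrict K$, then every $\vec x\in K^\nat$ is $\bc$-learnable, hence satisfies \cref{it:bfchar_bf} of \cref{thm:bfchar}: for each $i$ there is $\ba$ such that for all $j,\bb$, either $(\vec x_i,\ba)\not\leq_1(\vec x_j,\bb)$ or $\vec x_i\cong\vec x_j$. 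Now if $\vec x_j\leq_2\vec x_i$, then unwinding the definition of $\leq_2$ gives some $\bb$ with $(\vec x_i,\ba)\leq_1(\vec x_j,\bb)$, and hence $\vec x_i\cong\vec x_j$. So on the members of any $\bc$-learnable family, $\equiv_2$ already implies $\cong$, and the $\equiv_2$-least index $g^*$ your argument stabilizes on is genuinely isomorphic to the target. This observation is the missing idea; with it, your ``naive'' first attempt is essentially the paper's proof.
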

  \begin{proof}
    Suppose that $\mathbf{L}$ uniformly $\bc$-learns $\cong\restrict K$. Define $f$ by
    \[ f(\mathbf{L})(\vec \A,\S)(n):= \mu i [ \vec \A_i\equiv_2\mathbf{L}(\S)(n)].\]
    By an argument identical to the one in \cref{proposition:exequalsbc}, $f$ is as required.
  \end{proof}
  The proofs of \cref{proposition:exequalsbc,prop:uniformexequalsbc} rely on the presence of invariants in the form of back-and-forth relations. Back-and-forth relations can be formulated for any equivalence relation induced by the action of a Polish group~\cite{hjorth2000}. Thus, with a little more effort, one could probably replicate the proof of \cref{prop:uniformexequalsbc} for such equivalence relations, streamlining the argument from~\cite{rossegger2024} in the orbit setting.

\section{Learnability vs complexity of families of structures}
\label{sec:complexity}
\Cref{proposition:exequalsbc} shows that, assuming we know the family of structures to be learned, $\ex$- and $\bc$-learning coincide. The next section shows that learnability has, in some sense, little to do with the complexity of the families to be learned. In other words, we retrieve the fact that $\bc$-learning a family is a more difficult task.
\subsection{Trees and analytic sets}
   Given $\sigma,\tau \in \omega^{n}$, we define 
   \[\sigma*\tau:=\str{ \sigma(0),\tau(0),\dots ,\sigma(n-1),\tau(n-1) }.\] We can extend this definition to infinite sequences $f,g\in\omega^\omega$ by setting $f*g(n)=\sigma*\tau(n)$ where $\sigma\prec f$ and $\tau\prec g$ with $|\sigma|=|\tau|=n$. Given trees $T$ and $S$, the \define{interleaving of $T$ and $S$} is 
   \[
       T*S:=\{ \sigma*\tau: |{\sigma}|=|{\tau}| \land \sigma \in T \land \tau \in S\}.
   \]
   Notice that $T*S$ is a tree and is well-founded if and only if both $T$ and $S$ are well-founded. 

   Recall that the \define{Kleene-Brouwer order} is the order on $\omega^{<\omega}$ defined by
   \begin{align*}
       \sigma\leq_{KB}\tau := \sigma\preceq \tau\ &\text{or
   $\sigma$ and $\tau$ are incomparable}\\
                                  &\text{and for the least $i$ such that $\sigma(i)\neq \tau(i)$, $\sigma(i)<\tau(i)$.}
   \end{align*}
   The most important property of $\leq_{KB}$ is that if $T\subseteq \omega^\omega$ is a tree, then the Kleene-Brouwer order restricted to $T$, $KB(T)$, is a well-order if and only if $T$ is well-founded. Ill-founded trees and linear orders are the archetypical $\Sigmab^1_1$ complete sets.
   \begin{theorem}[folklore]
   \label{theorem:folklore}
       The sets $\WO=\{ L\in \mathrm{LO}: L \text{ is a well-order}\}$ and $\WF=\{T\in \mathrm{Tr}: T\text{ is well-founded}\}$ are $\Pib^1_1$-complete. Likewise, $\NWO=\mathrm{LO}\setminus \WO$ and $\IF=\mathrm{Tr}\setminus\WF$ are $\Sigmab^1_1$-complete.
   \end{theorem}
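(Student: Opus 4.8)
The plan is to prove the tree versions first and then transfer to linear orders via the Kleene--Brouwer order, whose crucial property the excerpt has already recorded. I would split the work into an easy membership part and a harder completeness part. For membership, a tree is well-founded exactly when it has no infinite branch, so
\[
  T\in\WF \iff \forall f\in\Baire\,\exists n\,(f\restrict n\notin T).
\]
The matrix $\exists n\,(f\restrict n\notin T)$ is open in the pair $(f,T)$, so universally quantifying over the Polish space $\Baire$ yields a $\Pib^1_1$ set; dually $\IF$ is $\Sigmab^1_1$ since $T\in\IF \iff \exists f\,\forall n\,(f\restrict n\in T)$. For linear orders, $L\in\WO$ iff $L\in\mathrm{LO}$ and $L$ has no infinite strictly descending sequence, i.e.\ $\neg\exists f\,\forall n\,(f(n+1)<_L f(n))$, which is again $\Pib^1_1$, and $\NWO$ is correspondingly $\Sigmab^1_1$.

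The substantial step is completeness, which I would obtain for $\IF$ first. Here I would invoke the tree normal form for analytic sets: every $\Sigmab^1_1$ set $A\subseteq\Baire$ is the projection of a closed set, hence $A=\{x : \exists y\,(x,y)\in[S]\}$ for some tree $S$ on $\omega\times\omega$. Setting $S(x):=\{\sigma : (x\restrict|\sigma|,\sigma)\in S\}$, the map $x\mapsto S(x)$ is continuous and $x\in A \iff S(x)\in\IF$, since an infinite branch of $S(x)$ is precisely a witness $y$. This exhibits $\IF$ as $\Sigmab^1_1$-hard, hence $\Sigmab^1_1$-complete, and by passing to complements $\WF$ is $\Pib^1_1$-complete (a continuous reduction of a $\Pib^1_1$ set $A$ to $\WF$ is obtained by reducing its $\Sigmab^1_1$ complement to $\IF$).

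Finally I would transfer hardness to linear orders using the Kleene--Brouwer reduction recorded just above the statement: $T\mapsto KB(T)$ is continuous and, by the stated property, satisfies $T\in\WF \iff KB(T)\in\WO$ and $T\in\IF \iff KB(T)\in\NWO$. Composing with the reductions of the previous paragraph shows $\WO$ is $\Pib^1_1$-hard and $\NWO$ is $\Sigmab^1_1$-hard, and together with the membership computations this gives the asserted completeness.

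The main obstacle I anticipate is not conceptual but the bookkeeping needed to make the reductions land in the right spaces: one must check that $x\mapsto S(x)$ and $T\mapsto KB(T)$ are genuinely continuous maps into $\mathrm{Tr}$ and $\mathrm{LO}$, and in particular that $KB(T)$, which a priori orders the countable set of nodes of $T$, is recoded as a linear order on $\omega$ uniformly in $T$ (handling finite or empty $T$ so that the domain stays fixed). The single genuinely nontrivial input is the tree normal form for $\Sigmab^1_1$ sets (see \cite{kechris2012}); everything else reduces to the defining formulas above and the already-established property of $\leq_{KB}$.
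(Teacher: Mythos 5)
The paper offers no proof of this theorem---it is cited as folklore---and your argument is exactly the canonical one behind that citation (upper bounds read off the defining formulas, $\Sigmab^1_1$-hardness of $\IF$ via the tree representation of analytic sets, transfer to $\WO$ and $\NWO$ through the Kleene--Brouwer order), so it is correct and takes the expected route. The one bookkeeping point you flag is real but routine: recode $KB(T)$ as an order with universe $\omega$ by, say, placing the nodes of $\omega^{<\omega}\setminus T$ in a fixed well-ordered tail, which preserves well-orderedness exactly when $KB(T)$ is a well-order and keeps the map continuous.
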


\subsection{Pseudo well-orders and their properties}
Harrison~\cite{harrison1968} showed the existence of so-called pseudo well-orders; computable linear orders that are not well-ordered but do not contain hyperarithmetic descending sequences. As most results in computability theory, his result relativizes to arbitrary oracles $x\in 2^\omega$. We thus say that a linear order $L$ is a \define{pseudo well-order for $x\in 2^\omega$} if $x\geq_T L$, $L$ is not well-ordered and there are no $\Delta^1_1(x)$ descending chains in $L$. We say that $L$ is a \define{pseudo well-order} if it is a pseudo well-order for some $x\in 2^\omega$. Below we state the relativized versions of some structure-theoretic facts about pseudo well-orders.

\begin{lemma}[cf. {\cite[Lemma VI.7]{montalban2023a}}]
  \label{lemma:initialsegment}
   If $L$ is a pseudo well-order for $x\in 2^\omega$, then $L$ has order-type $\omega_1^{x} + \omega_1^{x} \cdot \mathbb{Q}+\alpha$ where $\alpha<\omega_1^x$.
\end{lemma}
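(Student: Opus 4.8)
The plan is to decompose $L$ into its well-founded part $W=\mathrm{wf}(L)$, the longest well-ordered initial segment, and its ill-founded remainder $M=L\setminus W$, to show $\mathrm{otp}(W)=\omega_1^x$, and then to analyze $M$ via condensation into maximal well-ordered intervals, showing these blocks are copies of $\omega_1^x$ arranged in type $\mathbb{Q}$, with at most one exceptional final block of some order type $\alpha<\omega_1^x$.

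First I would pin down $W$. It is well-ordered, since a descending sequence inside $W$ would lie below some $c\in W$ and witness that the initial segment $L_{<c}=\{b\in L: b<_L c\}$ is ill-founded, contradicting $c\in W$; moreover $W$ is a $\Pi^1_1(x)$ initial segment. For the upper bound $\mathrm{otp}(W)\le\omega_1^x$: if some $c\in W$ occupied position $\omega_1^x$, then $L_{<c}$ would be an $x$-computable well-order of order type $\omega_1^x$, which is impossible. For the lower bound, suppose $\mathrm{otp}(W)=\delta<\omega_1^x$ and fix an $x$-computable copy of $\delta$; then $b\in W$ iff $L_{<b}$ order-embeds into $\delta$, a $\Sigma^1_1(x)$ condition, so $W$ — and hence $M$ — is $\Delta^1_1(x)$. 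Since $L_{<a}$ is ill-founded for every $a\in M$ while $W$ is well-ordered, some term of a descending sequence below $a$ must already lie in $M$, so every element of $M$ has an $M$-element strictly below it; starting from the $\omega$-least element of $M$ and repeatedly descending then yields a $\Delta^1_1(x)$ descending sequence, contradicting the pseudo-well-order hypothesis. Thus $\mathrm{otp}(W)=\omega_1^x$.

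For $M$, define $a\sim b$ iff the closed interval between $a$ and $b$ is well-ordered; a concatenation of two well-ordered intervals is well-ordered, so $\sim$ is an equivalence relation whose classes are convex, and since $L$ is $x$-computable every such interval automatically has order type $<\omega_1^x$. The embeddability trick from the previous paragraph reappears: for a block $C$ and $a\in C$, if the order types of the initial intervals $[b,a]$ ($b\in C$, $b\le a$) are bounded by some $\gamma<\omega_1^x$, then ``$[b,a]$ embeds into (a fixed $x$-computable copy of) $\gamma$'' is a $\Sigma^1_1(x)$ definition of $\{b\le a: b\in C\}$, which is also $\Pi^1_1(x)$; hence that set is $\Delta^1_1(x)$, and were it ill-founded it would yield a $\Delta^1_1(x)$ descending sequence, so it is well-ordered of type $<\omega_1^x$. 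The remaining blocks, whose interval types are cofinal in $\omega_1^x$, have order type exactly $\omega_1^x$ once known to be well-ordered. I would then show $M/\mathord\sim$ is dense with no least element — exploiting that an $x$-computable order has no well-ordered interval of type $\ge\omega_1^x$, so two full $\omega_1^x$-blocks can never be adjacent — and invoke Cantor's characterization of $\mathbb{Q}$ to arrange the $\omega_1^x$-blocks in type $\mathbb{Q}$, followed by at most one final block of type $\alpha<\omega_1^x$. Together with $\mathrm{otp}(W)=\omega_1^x$ this gives $\mathrm{otp}(L)=\omega_1^x+\omega_1^x\cdot\mathbb{Q}+\alpha$.

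The main obstacle is concentrated in the blocks whose interval types are cofinal in $\omega_1^x$: there the downward reach $\{b\le a: b\in C\}$ is properly $\Pi^1_1(x)$, mirroring $W$ itself, so the clean $\Delta^1_1(x)$ extraction above is unavailable, and ruling out an ill-founded such block — together with establishing density and the absence of a least block, which excludes ``staircase'' blocks such as copies of $\omega^*$ or $\zeta$ — is exactly where the no-$\Delta^1_1(x)$-descending-sequence hypothesis must be used most delicately. The recurring theme is that one must convert every deviation from the target shape into a \emph{genuinely} $\Delta^1_1(x)$ descending sequence, which forces one to control the definability of the relevant convex pieces and not merely their order types.
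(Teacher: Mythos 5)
You should first know that the paper does not contain its own proof of this lemma: it is quoted (``cf.'') from Montalb\'an's book, so your attempt can only be measured against the standard proof there. Your overall plan---split $L$ into the well-founded part $W$ and the ill-founded part $M$, then condense $M$ by ``$a\sim b$ iff $[a,b]$ is well-ordered''---is indeed the standard route, and your first half is correct and essentially complete: the upper bound $\mathrm{otp}(W)\le\omega_1^x$ via $x$-computability of the cones $L_{<c}$, and the lower bound via ``if $\mathrm{otp}(W)=\delta<\omega_1^x$ then $W$ is $\Sigma^1_1(x)$ by embeddability into $\delta$, hence $\Delta^1_1(x)$, hence $M$ is $\Delta^1_1(x)$ and the $\omega$-least-element recursion produces a $\Delta^1_1(x)$ descending sequence'' are both sound. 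The same mechanism also correctly disposes of blocks whose closed-interval types are bounded by some $\gamma<\omega_1^x$ (modulo one unstated but easy step: ill-foundedness of the backward reach forces \emph{every} element of it to have a reach-element below it, because a well-ordered initial piece plus a well-ordered final segment would make the whole reach well-ordered).

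The genuine gap is the one you flag yourself in the last paragraph, and it is not a technicality but the crux of the lemma: you never prove that a block whose interval types are cofinal in $\omega_1^x$ is well-ordered, nor do you rule out that such a block exists at all. Without this, nothing excludes a block of order type, say, $\omega^*+\omega_1^x$ (each closed interval inside it is well-ordered, so it is a legitimate $\sim$-class shape a priori), and then the conclusion $\omega_1^x+\omega_1^x\cdot\mathbb{Q}+\alpha$ is simply false; density, absence of a least block, and Cantor's theorem are all downstream of this claim. Moreover, the gap cannot be closed by the tools you set up: for such a block the backward reach is \emph{provably} not $\Sigma^1_1(x)$ (by $\Sigma^1_1$-boundedness applied to the unbounded interval types), so no variant of your $\Delta^1_1(x)$ extraction applies, and the natural weakening---``a nonempty $\Sigma^1_1(x)$ set in which every element has a set-element strictly below it yields a $\Delta^1_1(x)$ descending sequence''---is outright false: the ill-founded part of the Harrison order is $\Sigma^1_1$, every one of its elements has an element of it below, yet every descending sequence through it is a descending sequence of a pseudo well-order and hence not hyperarithmetic. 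So a genuinely new idea (this is exactly where the proof cited from Montalb\'an does additional work, mixing $\Sigma^1_1$ and $\Pi^1_1$ definitions of the relevant cuts with boundedness) is required, and your proposal stops short of it. One smaller error in the same paragraph: ``an $x$-computable order has no well-ordered interval of type $\ge\omega_1^x$'' is false for convex sets without endpoints---$W$ itself is one; it holds only for intervals $[a,b]$ with both endpoints in the order, which is fortunately all the adjacency argument needs.
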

\begin{lemma}[cf. {\cite[Lemma VI.11]{montalban2023a}}]
\label{lemma:montalban1}
   There is a computable operator $H$ such that for every $x \in 2^{\omega}$, $H^x$ is a pseudo well-order for $x$. Furthermore, $H^x$ has order-type $\omega_1^x +\omega_1^x \cdot \mathbb Q$.
\end{lemma}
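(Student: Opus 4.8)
The plan is to build $H^x$ in two stages: first a ``raw'' Harrison order $L^x$, and then $H^x := L^x\cdot\omega$, the sum of $\omega$ increasing copies of $L^x$, the extra multiplication serving only to clear away the well-ordered tail that \Cref{lemma:initialsegment} leaves open. For $L^x$ I would take $L^x := KB(T^x)$, the Kleene--Brouwer order of a tree $T^x\subseteq\omega^{<\omega}$ that is produced by a single computable functional applied to $x$, is ill-founded, and has no $\Delta^1_1(x)$ infinite path. Granting such a uniform assignment $x\mapsto T^x$, the map $x\mapsto KB(T^x)\mapsto KB(T^x)\cdot\omega$ is a computable operator, which is exactly what makes $H$ an operator in the required sense.

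The key input, and the step I expect to be the real obstacle, is the uniform, relativized existence of $T^x$: an $x$-computable ill-founded tree on $\omega$ with no $\Delta^1_1(x)$ path. This is the combinatorial heart of Harrison's theorem, and it is special to Baire space---unlike a nonempty $\Pi^0_1$ class in $2^\omega$, a nonempty $\Pi^0_1$ class in $\omega^\omega$ (such as the set of paths of $T^x$) may contain no hyperarithmetic-in-$x$ member. Granting it, the three clauses defining a pseudo well-order for $x$ fall out for $L^x=KB(T^x)$: it is $\le_T x$ because the Kleene--Brouwer order of an $x$-computable tree is $x$-computable; it is not a well-order by \Cref{theorem:folklore}, since $T^x$ is ill-founded; and it has no $\Delta^1_1(x)$ descending sequence because any descending sequence in $KB(T^x)$ yields an infinite path of $T^x$ by the standard stabilization argument (each coordinate of the nodes along the sequence is eventually constant, and the limiting values spell out a branch), a path computed from the sequence and $x$ by a limit procedure and hence still $\Delta^1_1(x)$; a $\Delta^1_1(x)$ descending sequence would thus produce a $\Delta^1_1(x)$ path, contradicting the choice of $T^x$.

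It remains to pin down the order type, and here the second stage pays off. By \Cref{lemma:initialsegment}, $L^x$ has type $\lambda:=\omega_1^x+\omega_1^x\cdot\mathbb Q+\alpha$ for some $\alpha<\omega_1^x$, where the tail $\alpha$ genuinely cannot be assumed to vanish (appending a single point to $L^x$ would give $\alpha=1$, still a pseudo well-order). Passing to $H^x=L^x\cdot\omega$ removes it: since $\omega_1^x$ is additively indecomposable we have $\alpha+\omega_1^x=\omega_1^x$, so
\[
  \lambda\cdot\omega=\omega_1^x+(\omega_1^x\cdot\mathbb Q+\omega_1^x)\cdot\omega=\omega_1^x+\omega_1^x\cdot\big((\mathbb Q+1)\cdot\omega\big)=\omega_1^x+\omega_1^x\cdot\mathbb Q,
\]
using that $(\mathbb Q+1)\cdot\omega$ is a countable dense order without endpoints, hence isomorphic to $\mathbb Q$. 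Finally $H^x$ is still a pseudo well-order for $x$: it is $\le_T x$ and ill-founded, and any descending sequence in a sum of $\omega$ increasing copies is eventually confined to a single copy, so a $\Delta^1_1(x)$ descending sequence in $H^x$ would restrict to one in $L^x$, which has just been excluded. Thus $H^x$ is a pseudo well-order for $x$ of order type exactly $\omega_1^x+\omega_1^x\cdot\mathbb Q$. The only genuinely nontrivial ingredient is the uniform basis fact of the second paragraph, carried out as in \cite[Lemma VI.11]{montalban2023a}; the order-type computation, which might look like the hard part, is dispatched by the $\cdot\omega$ trick once \Cref{lemma:initialsegment} is in hand.
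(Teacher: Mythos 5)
The paper does not prove this lemma at all---it is imported verbatim (relativized) from Montalb\'an's book, so there is no in-paper argument to compare against. Your reconstruction is the standard one and is essentially correct: take a uniformly $x$-computable ill-founded tree $T^x$ with no $\Delta^1_1(x)$ path, pass to $KB(T^x)$, and multiply by $\omega$ to kill the well-ordered tail. The verification that $KB(T^x)$ is a pseudo well-order for $x$ (descending $\leq_{KB}$-sequences yield paths by coordinatewise stabilization, and the path is computable from the jump of the sequence, hence still $\Delta^1_1(x)$), the observation that the tail $\alpha$ cannot simply be assumed to vanish, and the computation $\lambda\cdot\omega=\omega_1^x+\omega_1^x\cdot\mathbb{Q}$ via $\alpha+\omega_1^x=\omega_1^x$ and $(\mathbb{Q}+1)\cdot\omega\cong\mathbb{Q}$ are all sound; note that this last computation is exactly the content of the paper's \Cref{lemma:illfoundedlinearorder}, so you are in effect re-proving that lemma and then applying it to $L^x$. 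Two caveats. First, the fact that $KB(T^x)$ is ill-founded when $T^x$ is ill-founded is the property of $\leq_{KB}$ stated before \Cref{theorem:folklore}, not \Cref{theorem:folklore} itself. Second, and more substantively, the one genuinely nontrivial ingredient---the \emph{uniform} existence of a computable functional $x\mapsto T^x$ producing an ill-founded tree with no $\Delta^1_1(x)$ path---is exactly the relativized Harrison theorem and is the entire content of the cited Lemma VI.11; you correctly isolate it but do not prove it. Since the lemma is explicitly a citation, deferring that core to the reference is acceptable, but you should be aware that everything you do prove is the routine part.
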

    
The following fact about pseudo well-orders is well known; for example it appears in the usual proof that shows that the isomorphism relation on linear orderings is $\pmb \Sigma_1^1$-complete (cf.~\cite[Lemma XI.2]{montalban2023a}).
\begin{lemma}
\label{lemma:illfoundedlinearorder}
      Suppose that $L$ is a pseudo well-order, then there is $x\in 2^\omega$ such that 
   \[ L\cdot\omega\cong \omega_1^x + (\omega_1^x \cdot \mathbb{Q})\]
\end{lemma}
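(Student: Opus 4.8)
The plan is to show that $L\cdot\omega$ is itself a pseudo well-order and to read off its order type from \Cref{lemma:initialsegment}, arguing that the trailing ordinal in that decomposition must vanish. Fix $x_0\in 2^\omega$ witnessing that $L$ is a pseudo well-order, so $x_0\geq_T L$, the order $L$ is ill-founded, and $L$ has no $\Delta^1_1(x_0)$ descending chain. Set $\gamma=\omega_1^{x_0}$. I will take the required oracle to be $x_0$ itself and prove $L\cdot\omega\cong \gamma+\gamma\cdot\mathbb{Q}$.

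First I would verify that $L\cdot\omega$ is a pseudo well-order for $x_0$. Presenting $L\cdot\omega$ on $\omega\times\omega$ by declaring $(m,a)<(n,b)$ iff $m<n$, or $m=n$ and $a<_L b$, shows $L\cdot\omega\leq_T L\leq_T x_0$; and $L\cdot\omega$ is ill-founded since $L$ is. The key point is that $L\cdot\omega$ has no $\Delta^1_1(x_0)$ descending chain. Indeed, the copies are arranged in order type $\omega$, so along any descending chain the copy-indices form a non-increasing sequence of naturals, hence are eventually equal to some fixed $n^\ast$ from some stage $I$ onward. Composing the tail of a putative $\Delta^1_1(x_0)$ descending chain with the (computable) projection of copy $n^\ast$ onto $L$ then yields a $\Delta^1_1(x_0)$ descending chain in $L$ --- note that hard-coding the constants $n^\ast,I\in\omega$ costs nothing in complexity --- contradicting the choice of $x_0$.

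With $L\cdot\omega$ a pseudo well-order for $x_0$, \Cref{lemma:initialsegment} gives $L\cdot\omega\cong \gamma+\gamma\cdot\mathbb{Q}+\alpha$ for some $\alpha<\gamma$. It remains to show $\alpha=0$. Suppose not; then the top $\alpha$ elements form a nonempty well-ordered final segment $F$ with least element $q$. But $q$ lies in some copy $L_n$, whence the entire next copy $L_{n+1}\cong L$ lies above $q$ and therefore inside $F$; since $L$ is ill-founded, $F$ contains an infinite descending sequence, contradicting that $F$ has ordinal order type. Hence $\alpha=0$ and $L\cdot\omega\cong \gamma+\gamma\cdot\mathbb{Q}=\omega_1^{x_0}+\omega_1^{x_0}\cdot\mathbb{Q}$, which is the claim with $x=x_0$.

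I expect the descending-chain step to be the main obstacle: one must be careful that extracting a descending chain of $L$ from one in $L\cdot\omega$ stays within $\Delta^1_1(x_0)$, which it does precisely because the eventual copy index is a single natural number that may be treated as a parameter. As a consistency check --- and an alternative route avoiding pseudo well-orders entirely --- one can compute directly: since $\gamma=\omega_1^{x_0}$ is additively indecomposable, $\alpha+\gamma=\gamma$ for every $\alpha<\gamma$, so $L\cdot\omega\cong \gamma+(\gamma\cdot\mathbb{Q}+\gamma)\cdot\omega\cong \gamma\cdot\bigl(1+(\mathbb{Q}+1)\cdot\omega\bigr)$, and $(\mathbb{Q}+1)\cdot\omega\cong\mathbb{Q}$ by Cantor's characterization of the countable dense linear order without endpoints, yielding $\gamma\cdot(1+\mathbb{Q})\cong\gamma+\gamma\cdot\mathbb{Q}$ once more.
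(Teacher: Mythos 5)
Your proof is correct, but the main argument takes a genuinely different route from the paper's. The paper applies \Cref{lemma:initialsegment} to $L$ itself, writing $L\cong\omega_1^x+\omega_1^x\cdot\mathbb{Q}+\alpha$, and then computes the order type of $L\cdot\omega$ purely by linear-order arithmetic: concatenating $\omega$ many copies and absorbing via $\alpha+\omega_1^x=\omega_1^x$ and $L\cdot\mathbb{Q}\cong L\cdot\mathbb{Q}+L+L\cdot\mathbb{Q}$. You instead apply \Cref{lemma:initialsegment} to $L\cdot\omega$, which requires the extra (correct and correctly proved) observation that $L\cdot\omega$ is again a pseudo well-order for the same oracle --- the copy-index of a descending chain is eventually constant, so a $\Delta^1_1(x_0)$ descending chain in $L\cdot\omega$ yields one in $L$ --- and then you kill the trailing $\alpha$ by a structural argument: a nonempty well-ordered final segment would contain a whole ill-founded copy of $L$. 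Your approach buys a reusable fact (closure of pseudo well-orders under right multiplication by $\omega$) at the cost of re-invoking the effective hypothesis; the paper's approach is shorter and uses only order arithmetic once the decomposition of $L$ is in hand. Amusingly, your closing ``consistency check'' via $(\mathbb{Q}+1)\cdot\omega\cong\mathbb{Q}$ and additive indecomposability of $\omega_1^{x_0}$ is essentially the paper's actual proof.
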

\begin{proof}
  By \cref{lemma:initialsegment} we may assume that $L$ has order-type $\omega_1^x+\omega_1^x\cdot \mathbb Q +\alpha$ where $\alpha<\omega_1^x$. We then get that
  \begin{align*}
    (\omega_1^x+\omega_1^x\cdot \mathbb Q +\alpha)\cdot \omega&=\omega_1^x+\omega_1^x\cdot \mathbb Q +\alpha+\omega_1^x+\omega_1^x\cdot \mathbb Q +\alpha+\omega_1^x+\omega_1^x\cdot \mathbb Q +\alpha+\cdots\\
                                         &=\omega_1^x+\omega_1^x\cdot \mathbb Q
  \end{align*}
  where the last equation holds because $\alpha+\omega_1^x=\omega_1^x$ by ordinal arithmetic and $L\cdot \mathbb Q\cong L\cdot \mathbb Q+L+L\cdot \mathbb Q$ for any countable linear ordering $L$.
\end{proof}

\begin{lemma}
\label{lemma:wellorderprop}
    Suppose that $L \in \WO$, then for every $x \in \Cantor$
    \begin{enumerate}\tightlist
        \item\label{it:wellorderprop_cong} $ L\cdot \omega \not\cong\ \omega_1^x+ (\omega_1^x\cdot \mathbb{Q})$,
        \item\label{it:wellorderprop_bf} $ L\cdot \omega \equiv_2\ \omega_1^x+ (\omega_1^x\cdot \mathbb{Q})$.
    \end{enumerate}
\end{lemma}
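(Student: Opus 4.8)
The plan is to treat the two items separately, since they are of quite different character. For item~\ref{it:wellorderprop_cong} I would argue by preservation of well-foundedness. As $L\in\WO$, the order $L\cdot\omega$ is again a well-order, being an $\omega$-indexed sum of copies of the well-order $L$. On the other hand $B:=\omega_1^x+\omega_1^x\cdot\mathbb{Q}$ is not a well-order: fixing a strictly descending sequence $q_0>q_1>\cdots$ in $\mathbb{Q}$ and taking the least element of the copy of $\omega_1^x$ sitting at each $q_n$ yields an infinite descending chain inside the $\omega_1^x\cdot\mathbb{Q}$ summand. Since any isomorphism carries a descending chain to a descending chain, $L\cdot\omega\not\cong B$, which is item~\ref{it:wellorderprop_cong}.

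For item~\ref{it:wellorderprop_bf} I would verify $A\equiv_2 B$ directly from the definition of the back-and-forth relations, where $A:=L\cdot\omega$; that is, I would establish both $A\leq_2 B$ and $B\leq_2 A$. The first step is to isolate two structural features shared by the two orders: each has a least element that begins an initial segment of order type $\omega$ (a chain of immediate successors $0<1<2<\cdots$ with nothing below it and nothing strictly between consecutive terms), and each is unbounded above, so that every element has infinitely many elements above it. For $A$ this holds because $L$ is an infinite well-order, whose first copy begins with an $\omega$-chain, and $L\cdot\omega$ has no greatest element; for $B$ it holds because the leading summand $\omega_1^x$ begins with an $\omega$-chain and $B$ has no greatest element.

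The heart of the argument is the level-$2$ strategy, which I would phrase as follows. To witness $A\leq_2 B$, given any tuple $\bar d$ played in $B$ I respond with the tuple $\bar c$ in $A$ obtained by placing the distinct values of $\bar d$, in their given order-and-equality pattern, onto the first finitely many elements of $A$'s initial $\omega$-chain. The point of this choice is that $\bar c$ has nothing below it and nothing strictly between consecutive entries, so the only room to extend $\bar c$ inside $A$ is above its maximum. Consequently, every atomic type realized by an extension $\bar c\,\bar e$ in $A$ is realized by a suitable extension $\bar d\,\bar f$ in $B$: the genuinely new points sit above the maximum, where $B$ is unbounded and therefore realizes any finite linear pattern, while coincidences with existing points are matched via the pattern-preserving correspondence between $\bar c$ and $\bar d$. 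This is precisely the condition $(B,\bar d)\leq_1(A,\bar c)$, and since this holds for every $\bar d$ we obtain $A\leq_2 B$. The symmetric strategy, responding inside $B$'s initial $\omega$-chain, gives $B\leq_2 A$, and together they yield $A\equiv_2 B$.

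The main obstacle, and the place where a natural wrong turn lurks, is getting the direction of the one-step relation $\leq_1$ right and recognizing that the structural mismatch between the two orders is harmless. One might worry that $A$, being a genuine well-order, cannot imitate the dense arrangement of $\omega_1^x$-blocks inside $B$, or conversely that $B$ cannot imitate $A$; but the strategy never needs to reproduce a wide gap, only to produce the \emph{tightest} possible configuration, with empty gaps below and between the chosen points, which both orders can do precisely because each has an $\omega$-initial segment. That this feature is essential rather than a convenient accident is shown by the dense order $\mathbb{Q}_{\geq 0}$: it has a least element and no greatest element, yet it is \emph{not} $\equiv_2$ to $\omega$, because it cannot realize the small finite gaps needed to answer a discrete opponent. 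Confirming that both $A$ and $B$ avoid this pathology, by exhibiting their common $\omega$-initial segments, is the one step I would carry out with care.
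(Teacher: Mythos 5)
Your proof is correct and follows essentially the same route as the paper's: item (1) by preservation of well-foundedness, and item (2) by exhibiting explicit $\exists$-player strategies for both directions of $\equiv_2$ that exploit the common initial $\omega$-segment and unboundedness above. The only (harmless) difference is that you use the ``respond with a gapless initial segment'' move in both directions and verify the resulting $\leq_1$ directly on atomic types, whereas the paper uses that move in one direction only and, in the other, tailors the response intervals via the criterion $K\leq_1 L\iff |K|\geq |L|$; note also that your parenthetical ``$L$ is an infinite well-order'' is not given, but all you actually need is that $L$ is nonempty, so that $L\cdot\omega$ is an infinite well-order.
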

\begin{proof}
    The proof of \cref{it:wellorderprop_cong} is immediate as $L\cdot \omega \in \WO$ and  $\omega_1^x+ (\omega_1^x\cdot \mathbb{Q}) \not\in \WO$.

  To prove \cref{it:wellorderprop_bf}, we prove the two inequalities. To see that $\omega_1^x+ (\omega_1^x\cdot \mathbb{Q}) \leq_2 L\cdot \omega$ we define a winning strategy for the $\exists$-player in the corresponding back-and-forth game played in two rounds. Without loss of generality the tuple picked by the $\forall$-player in $L\cdot \omega$ partitions this ordering into $n$ non-empty intervals $I_1,\dots ,I_n$. In order for the $\exists$-player to win it is sufficient to find a tuple partitioning $\omega_1^x+(\omega_1^x \cdot \mathbb{Q})$ into $n$ intervals $J_1,\dots J_n$ such that $I_i\leq_1 J_i$. Using the fact that for any two linear orders $L,K$, $K\leq_1 L$ if and only if $|K| \geq |L|$, one easily sees that the $\exists$-player succeeds in doing so by picking limit elements in the first $\omega_1^x$ if the corresponding left interval is infinite and suitable successor elements if the left interval is finite. The last interval is infinite in both orderings. 
    
To see that $L\cdot \omega \leq_2\ \omega_1^x+ (\omega_1^x\cdot \mathbb{Q})$, we again define a winning strategy for the $\exists$-player in the corresponding back-and-forth game. We assume without loss of generality that the $\forall$-player picks an ordered tuple that partitions $\omega_1^x + (\omega_1^x \cdot \mathbb{Q})$ into $n$ non-empty intervals $I_1,\dots, I_{n}$ in the first round. At this point, for the $\exists$-player to win, it suffices to take a partition $J_1,\dots,J_{n}$ of $L\cdot \omega$ such that for every $i \leq n$, $I_i \leq_1 J_i$. They pick the first $n-1$ elements of $L\cdot\omega$ as $J_1,\dots,J_{n-1}$ and all elements larger than the first $n-1$ elements as $J_{n}$. As the partition $I_{n}$ is necessarily infinite and all other partiations $J_i$ are of size $1$, $|I_{i}|\geq |J_{i}|$ for all $i$, as required.\end{proof}
    

We define two subsets of $Mod(\tau)^\omega$ connected to learning.
    \begin{align*}
      \rep&:=\{\vec \A : \vec \A \text{ is $\ex$-learnable}\} \\
    \norep&:=\rep\cap \{ \vec\A: \forall i,j \A_i\not\cong\A_j\}\end{align*}
The set $\rep$ is just the set of learnable families, while $\norep$ has the additional requirement that elements of these families need to be pairwise non-isomorphic. We will now analyze the descriptive complexity of these sets.
\begin{lemma}
  \label{lem:bcsigma11complete}
  The set $\rep$ is $\pmb\Sigma_1^1$-complete.
  \end{lemma}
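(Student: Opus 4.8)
The plan is to establish both halves of completeness separately: membership $\rep\in\pmb\Sigma_1^1$, which I would read off from the back-and-forth characterization, and $\pmb\Sigma_1^1$-hardness, which I would prove by a continuous reduction from the ill-founded trees $\IF$. For the upper bound I would use that, by \Cref{thm:bfchar}, $\vec\A\in\rep$ if and only if $\forall i\,\exists\ba\,\forall j\,\forall\bb\ ((\A_i,\ba)\not\leq_1(\A_j,\bb)\lor\A_i\cong\A_j)$. Unwinding the definition, $(\A_i,\ba)\leq_1(\A_j,\bb)$ reads $\forall\bd\,\exists\bar c\,(\A_j,\bb\bd)\leq_0(\A_i,\ba\bar c)$, and since $\leq_0$ only inspects finitely many atomic formulas it is clopen in the pair of structures; hence $\leq_1$ is $\Pi^0_2$ and $\not\leq_1$ is $\Sigma^0_2$, in particular Borel. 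As $\cong$ is $\pmb\Sigma_1^1$ and a disjunction of a Borel set with a $\pmb\Sigma_1^1$ set is $\pmb\Sigma_1^1$, while the remaining quantifiers $\forall i,\exists\ba,\forall j,\forall\bb$ all range over $\omega$ (resp.\ $\baire$) and $\pmb\Sigma_1^1$ is closed under number quantification, the whole matrix defines a $\pmb\Sigma_1^1$ set.

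For hardness I would reduce $\IF$, which is $\pmb\Sigma_1^1$-complete by \Cref{theorem:folklore}. Given a tree $T$, let $\mathcal H^T$ be a Harrison tree relative to $T$ (ill-founded, computable from $T$, with no $\Delta^1_1(T)$ infinite branch) and set $L_T:=KB(T*\mathcal H^T)$. Using the interleaving and the Kleene--Brouwer order, I would argue that if $T\in\WF$ then $T*\mathcal H^T$ is well-founded and so $L_T\in\WO$, whereas if $T\in\IF$ then $T*\mathcal H^T$ is ill-founded and every infinite branch through it computes an infinite branch through $\mathcal H^T$, hence is not $\Delta^1_1(T)$; correspondingly $L_T$ is ill-founded with no $\Delta^1_1(T)$ descending chain, i.e.\ a pseudo well-order for $T$. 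This step is the main obstacle: a naive reduction sending $T$ to $KB(T)$ would fail, since an ill-founded tree can carry a hyperarithmetic branch and then $KB(T)$ need be neither a pseudo well-order nor $\equiv_2$-comparable to the Harrison order; interleaving with $\mathcal H^T$ is exactly what forces ill-founded inputs to become pseudo well-orders rather than arbitrary ill-founded orders.

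With $L_T$ in hand I would set $A:=L_T\cdot\omega$ and $B:=H^T=\omega_1^T+\omega_1^T\cdot\mathbb Q$ (via \Cref{lemma:montalban1}), both uniformly computable from $T$, and take $\vec\A(T):=(A,B,A,B,\dots)$. If $T\in\WF$, then $L_T\in\WO$ and \Cref{lemma:wellorderprop} yields $A\equiv_2 B$ together with $A\not\cong B$; applying \Cref{thm:bfchar} at the index of $A$, the relation $B\leq_2 A$ gives, for every $\ba$, some $\bb$ with $(A,\ba)\leq_1(B,\bb)$, so the clause of item (3) fails for that index and $\vec\A(T)\notin\rep$. If $T\in\IF$, then $L_T$ is a pseudo well-order for $T$, so by \Cref{lemma:initialsegment,lemma:illfoundedlinearorder} (with $x=T$) we get $A=L_T\cdot\omega\cong\omega_1^T+\omega_1^T\cdot\mathbb Q=B$; the family then has a single isomorphism type and is trivially $\ex$-learnable, so $\vec\A(T)\in\rep$.

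Thus $T\in\IF$ if and only if $\vec\A(T)\in\rep$. Finally I would note that $T\mapsto\vec\A(T)$ is a composition of computable operators (the relativized Harrison tree, $*$, $KB$, multiplication by $\omega$, and $H$), hence continuous, so it witnesses $\pmb\Sigma_1^1$-hardness of $\rep$; combined with the upper bound this gives $\pmb\Sigma_1^1$-completeness. I expect the only delicate point to be verifying the branch-complexity bookkeeping in the second paragraph that makes $L_T$ a genuine pseudo well-order for $T$ in the ill-founded case.
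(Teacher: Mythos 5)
Your proof is correct and takes essentially the same route as the paper's: the upper bound via item \cref{it:bfchar_bf} of \cref{thm:bfchar}, and hardness by interleaving the input tree with a relativized Harrison tree, passing to the Kleene--Brouwer order, multiplying by $\omega$, and alternating with a copy of $\omega_1^T+\omega_1^T\cdot\mathbb Q$, using \cref{lemma:illfoundedlinearorder,lemma:wellorderprop} exactly as the paper does. The only (harmless) differences are that you reduce from $\IF$ directly rather than from an arbitrary $\Sigma^1_1(y)$ set, and that you take $H^T$ itself as the second structure where the paper uses $KB(T_H^{x\oplus y})\cdot\omega$.
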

  \begin{proof}
    By \cref{it:bfchar_bf} of \cref{thm:bfchar} we have that $\rep$ is $\Sigma_1^1$. Hence, to prove the theorem we need to prove that for any $\bSigma_1^1$ set $A$, there is a continuous reduction  from $A$ to $\rep$. Our proof heavily relies on the proof of \cite[Lemma XI.2]{montalban2023a} showing that the isomorphism relation on $\LO$ is $\Sigma_1^1$-complete.
  
  Without loss of generality, assume that $A\subseteq 2^\omega$ is $\Sigma^1_1(y)$ for some fixed $y\in 2^\omega$. Our reduction will  map $x\in 2^\omega$ to a family $f(x)=(\A_i)_{i \in \omega}$ where
  \begin{itemize}
    \item $\A_{2i} \cong \omega_1^{x\oplus y}+(\omega_1^{x\oplus y}\cdot \mathbb{Q})$ if $x \in A$ and $\A_{2i} \in \WO$ if $x \notin A$; 
    \item $\A_{2i+1}\cong \omega_1^{x\oplus y}+(\omega_1^{x\oplus y}\cdot \mathbb{Q})$.
  \end{itemize}

  Since $A \in \Sigma_1^1(y)$ and $\IF\in \Sigma^1_1$ is $\bSigma_1^1$-complete via a computable reduction, we have that for any $x\in 2^\omega$ there is an $x\oplus y$-computable tree $T_x$ such that $x \in A$ if and only if $T_x \in \IF$.
  
  By \cref{lemma:montalban1} there is a computable operator $H$ such that $H^{x\oplus y}$ is a pseudo well-order relative to $x\oplus y$. Let $T_H^{x\oplus y}$ be the tree of descending sequences in $H^{x\oplus y}$. 
  Then, for any $i$, let $\A_{2i}:=KB(T_x* T_H^{x\oplus y})\cdot \omega$ and $\A_{2i+1}:=KB(T_H^{x\oplus y})\cdot \omega$.
  Notice that $T_H^{x\oplus y} \in \IF$ regardless of whether $x \in A$. However, clearly $x\in A$, if and only if $T_x* T_H^{x\oplus y}\in \IF$ and any $z\in[T_x* T_H^{x\oplus y}]$  computes a path in $[T_H^{x\oplus y}]$ and thus $z$ cannot be hyperarithmetic in $x\oplus y$.
 
By \cref{lemma:illfoundedlinearorder} we have that
  \begin{align}
    \label{eq:xinA}x \in A &\implies T_x*T_H^{x\oplus y} \in \IF \implies \A_{2i}\cong \omega_1^{x\oplus y}+(\omega_1^{x\oplus y}\cdot \mathbb Q)\cong \A_{2i+1} \not\in \WO,\\
    \label{eq:xninA}x \notin A & \implies T_x*T_H^{x\oplus y} \notin \IF \implies \A_{2i+1}\not\cong KB(T_x*T_H^{x\oplus y})\cdot \omega=\A_{2i} \in \WO
  \end{align}
  where the isomorphism relations follow from \cref{lemma:illfoundedlinearorder,lemma:wellorderprop}. Now, if $x\in A$, then all structures in $\vec \A$ are isomorphic, so $\vec \A$ is trivially $\ex$-learnable and thus $\vec \A\in \rep$. On the other hand, if $x\not\in A$, then by \cref{lemma:wellorderprop} $\A_{2i}\equiv_2 \A_{2i+1}$ for every $i$ and thus $x\not\in \rep$ by \cref{thm:bfchar}.
\end{proof}
\begin{lemma}
\label{lem:norepborel}
  The set $\norep$ is Borel.
\end{lemma}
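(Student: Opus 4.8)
The plan is to describe $\norep$ directly through the back-and-forth characterization (3a) of \Cref{thm:bfchar}, which is valid precisely for pairwise non-isomorphic families, and to exploit the fact that this condition is both Borel and \emph{self-enforcing} of pairwise non-isomorphism. Concretely, I would introduce the set
\[
  B:=\{\vec\A : \forall i\,\exists\ba\,\forall j\neq i\,\forall\bb\ (\A_i,\ba)\not\leq_1(\A_j,\bb)\}
\]
and prove that $\norep=B$. There are then exactly two things to verify: that $B$ is Borel, and that the two inclusions $\norep\subseteq B$ and $B\subseteq\norep$ hold.

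First I would check that $B$ is Borel. Unwinding the definition of the back-and-forth relations, the relation $(\A_i,\ba)\leq_1(\A_j,\bb)$, viewed as a subset of $Mod(\tau)^\omega$, equals $\bigcap_{\bd}\bigcup_{\bar c}\{\vec\A : (\A_j,\bb\bd)\leq_0(\A_i,\ba\bar c)\}$. For fixed finite tuples the condition $\leq_0$ inspects only finitely many atomic formulas, each of which is a continuous function of the coordinate structures, so each such set is clopen; hence $\leq_1$ is Borel and $\not\leq_1$ is $\bSigma^0_2$. All remaining quantifiers in $B$ range over the countable sets $\omega$ (for $i,j$) and $\baire$ (for $\ba,\bb$), so $B$ is a countable quantifier combination of Borel sets and is therefore Borel.

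The inclusion $\norep\subseteq B$ is immediate: any $\vec\A\in\norep$ is pairwise non-isomorphic and $\ex$-learnable, so condition (3a) of \Cref{thm:bfchar} yields $\vec\A\in B$. The reverse inclusion $B\subseteq\norep$ is the crux, and the key observation—which is really why the lemma is true—is that membership in $B$ \emph{already} forces pairwise non-isomorphism. Indeed, suppose $\vec\A\in B$ but $\A_i\cong\A_j$ for some $i\neq j$, witnessed by an isomorphism $\varphi$. Taking the tuple $\ba$ that $B$ supplies for the index $i$ and setting $\bb:=\varphi(\ba)$, the pointed structures $(\A_i,\ba)$ and $(\A_j,\bb)$ are isomorphic, hence $(\A_i,\ba)\equiv_1(\A_j,\bb)$ and in particular $(\A_i,\ba)\leq_1(\A_j,\bb)$, contradicting the defining property of $B$. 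So every $\vec\A\in B$ is pairwise non-isomorphic, and then applying the now-legitimate equivalence of (3a) with $\ex$-learnability in \Cref{thm:bfchar} shows $\vec\A\in\rep$; thus $\vec\A\in\norep$. Combining the inclusions gives $\norep=B$, and the previous step shows this set is Borel.

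I expect the main obstacle to be conceptual rather than computational. The naive reading of $\norep$ as ``$\ex$-learnable $\wedge$ pairwise non-isomorphic'' only bounds its complexity by $\Pib^1_1$, since pairwise non-isomorphism is a countable conjunction of the $\Pib^1_1$ conditions $\A_i\not\cong\A_j$; written this way one cannot see that the set is Borel. The whole point is to never write the isomorphism relation down: condition (3a) encodes learnability in a form that secretly entails non-isomorphism, so the genuinely $\Sigmab^1_1$ ingredient of the analogous characterization (3) of the full set $\rep$—namely the disjunct $\A_i\cong\A_j$—simply vanishes. This is exactly the contrast with \Cref{lem:bcsigma11complete}, where that disjunct cannot be removed. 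The remaining verifications, the clopenness of $\leq_0$ on fixed tuples and the resulting Borel bound for $\leq_1$, are routine once the definitions are unwound.
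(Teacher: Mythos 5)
Your proof is correct and follows essentially the same route as the paper, which simply cites the equivalence of \cref{it:bfchar_ex} with Item 3a of \Cref{thm:bfchar}. Your writeup usefully supplies the details the paper leaves implicit---in particular the observation that condition (3a) already forces pairwise non-isomorphism, which is exactly what lets one avoid writing down the $\Pib^1_1$ non-isomorphism clause.
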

  \begin{proof}
  This follows immediately from the definition of and the equivalence between \cref{it:bfchar_ex} and Item 3a in \cref{thm:bfchar}.
  \end{proof}
Combining \Cref{lem:bcsigma11complete,lem:norepborel} we obtain the following.
\begin{theorem}
    \label{thm:repnorep}
    There is no Borel function from $Mod(\tau)^\nat$ to $Mod(\tau)^\nat$ such that $\vec\A \in \rep$ if and only if $\vec \A \in \norep$.
\end{theorem}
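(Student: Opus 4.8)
The plan is to derive a contradiction purely from a complexity computation, since all the substantive work has already been carried out in \Cref{lem:bcsigma11complete,lem:norepborel}. I read the statement as asserting that there is no Borel $f : Mod(\tau)^\nat \to Mod(\tau)^\nat$ such that $\vec\A \in \rep$ if and only if $f(\vec\A) \in \norep$; that is, no Borel reduction of $\rep$ to $\norep$. So I would begin by assuming, toward a contradiction, that such an $f$ exists. The defining property of $f$ can then be rewritten as the single identity $\rep = f^{-1}(\norep)$, which is the pivot of the whole argument.

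From here I would invoke the two preceding lemmas. By \Cref{lem:norepborel}, the target set $\norep$ is Borel. Since the preimage of a Borel set under a Borel function is again Borel (a standard closure property of the Borel $\sigma$-algebra), the identity $\rep = f^{-1}(\norep)$ forces $\rep$ to be Borel. But \Cref{lem:bcsigma11complete} asserts that $\rep$ is $\pmb\Sigma_1^1$-complete, and a $\pmb\Sigma_1^1$-complete set is never Borel: were it Borel, then by closure of the Borel sets under continuous (indeed Borel) preimages together with the completeness of $\rep$, every $\pmb\Sigma_1^1$ set would be Borel, contradicting the existence of genuinely non-Borel analytic sets such as $\IF$ from \Cref{theorem:folklore}. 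This contradiction closes the argument.

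I do not expect any real obstacle at this stage, and that is precisely the point I would emphasize: the entire difficulty of \Cref{thm:repnorep} has been front-loaded into \Cref{lem:bcsigma11complete}, where the $\pmb\Sigma_1^1$-hardness of $\rep$ is established through the pseudo well-order machinery of \Cref{lemma:illfoundedlinearorder,lemma:wellorderprop}. Once that lemma and the Borelness of $\norep$ are in hand, the theorem is a one-line invocation of Souslin's theorem (equivalently, the non-Borelness of complete analytic sets). The only care needed is to record explicitly that ``Borel function'' is used in the sense that preimages of Borel sets are Borel, so that the step $\rep = f^{-1}(\norep)$ legitimately yields a Borel set.
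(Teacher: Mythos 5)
Your proof is correct and is exactly the argument the paper intends: the theorem is stated as an immediate consequence of \Cref{lem:bcsigma11complete,lem:norepborel}, obtained by noting that a Borel reduction would make the $\pmb\Sigma_1^1$-complete set $\rep$ equal to the Borel preimage of the Borel set $\norep$, contradicting Souslin's theorem. Your reading of the (slightly misstated) theorem as asserting the nonexistence of a Borel reduction $f$ with $\vec\A \in \rep \iff f(\vec\A) \in \norep$ is the intended one.
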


		\printbibliography
\end{document}